\newcommand{\f}{\frac}
\newcommand{\R}{\mathbb {R}}
\newcommand{\mc}{\mathcal}
\title{Seasonal influence on age-structured invasive species with yearly generation}
\author{Yingli Pan\thanks{Department of Mathematics, Harbin Institute of Technology, Harbin, Heilongjiang, 150001, China ({\tt ylpanhit@gmail.com})} \and Jian Fang\thanks{Institute for Advanced Studies in Mathematics and Department of Mathematics, Harbin Institute of Technology, Harbin, Heilongjiang, 150001, China ({\tt jfang@hit.edu.cn})}  \and  Junjie Wei\thanks{School of Science, Harbin Institute of Technology in Weihai, Weihai, Shandong, 264209, China ({\tt weijj@hit.edu.cn})}}
\begin{document}

\maketitle

\begin{abstract}
How do seasonal successions influence the propagation dynamics of an age-structured invasive species? We investigate this problem by considering the scenario that the offsprings are reproduced in spring and then reach maturation in fall within the same year. For this purpose, a reaction-diffusion system is proposed, with yearly periodic time delay and spatially nonlocal response caused by the periodic developmental process.  By appealing to the recently developed dynamical system theories, we obtain the invasion speed $c^*$ and its coincidence with the minimal speed of time periodic traveling waves. {\it The characterizations of $c^*$ suggest that (i) time delay decreases the speed and its periodicity may further do so; (ii) the optimal time to slow down the invasion is the season without juveniles; (iii) the speed increases to infinity with the same order as the square root of the diffusion rate.}
\end{abstract}

\begin{keywords}
Periodic delay; Seasonal influence; Invasion speed; Traveling wave.
\end{keywords}

\begin{AMS}
Primary: 92D25; Secondary:  34K13, 35C07, 37N25
\end{AMS}

\pagestyle{myheadings}
\thispagestyle{plain}
%\markboth{TEX PRODUCTION AND V. A. U. THORS}{SIAM MACRO EXAMPLES}

\section{Introduction}
Seasonal successions bring invasive species temporally dynamic habitats, which then affect their life cycles, including breeding, development, mobility, maturation, mortality, etc, and further give rise to the propagation complexity during their invasion process. How seasonality influences the propagation of an age-structured invasive species is a challenging problem from the viewpoints of both mathematical modeling and analysis. 

In this paper, we intend to investigate such a problem for a single invasive species by considering the scenario that the species has distinct breeding and maturation seasons. More precisely, we assume that the species has the following biological characteristics.

\begin{enumerate}
  \item[(B1)]
     The species can be classified into two stages by age: mature and immature. An individual at time $t$ belongs to the mature class if and only if its age exceeds the time dependent positive number $\tau(t)$. Within each stage, all individuals share the same behavior. 
  \item[(B2)]
     Adults reproduce offsprings once a year in a fixed season (spring), and they reach maturation in another season (fall) within the same year.
   \item[(B3)]All behaviors are yearly periodic due to seasonal succession.
          \item[(B4)]
     The spatial habitat is ideally assumed to be one dimensional and homogeneous in locations.
\end{enumerate}
We refer to Fig. 1.1 for a schematic illustration of the life cycle in a year $[0,T]$. Since the habitat is dynamic in time, the biological developmental rate of species may be different from one time to another. So we assume that the duration $\tau=\tau(t)$ from newborn to being adult is yearly time periodic. Further, since the developmental rate depends only on time, juveniles cannot reach maturation before those born ahead of them. Therefore, one has following additional implicit assumption.
\begin{enumerate}
\item[(B5)]  $t-\tau(t)$ is strictly increasing in $t$, that is, $\tau'(t)< 1$ if $\tau$ is a smooth function.
\end{enumerate}

\begin{figure}[t]\label{f1}
  \begin{center}
  \includegraphics[width=.99\textwidth]{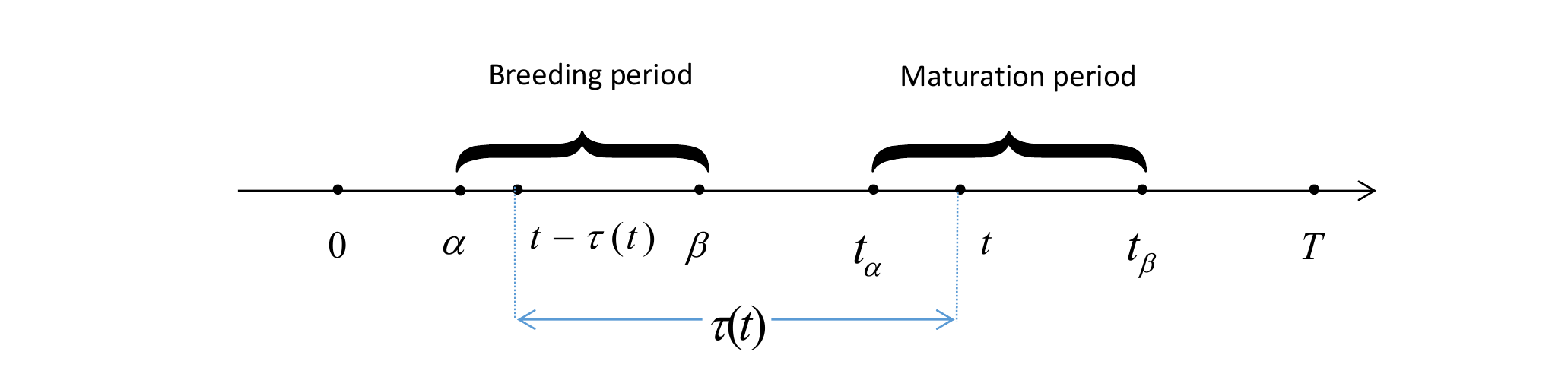}
    \caption{Schematic illustrations of the life cycle for the species with yearly generation, where $[\alpha,\beta]$ is the breeding period  and  $[t_\alpha,t_\beta]$ is the maturation period.}
  \end{center}
\end{figure}

To mathematically model such an invasive species, we start from the following growth law for two stages of age-structured population (see Metz and Diekmann \cite{MetzDiekmann86}):
\begin{equation}\label{growthlaw}
  \left\{
  \begin{aligned}
&\left( \frac{\partial }{\partial t}+\frac{\partial }{\partial a}\right) p=D_M(t)\frac{\partial^{2}}{\partial x^{2}}p-d_M(t)p,  \quad  a\ge \tau(t)\\
&\left( \frac{\partial }{\partial t}+\frac{\partial }{\partial a}\right) p=D_I(t)\frac{\partial^{2}}{\partial x^{2}}p-d_I(t)p,  \quad 0<a< \tau(t),
   \end{aligned}
   \quad t>0, x\in\R,
  \right.
\end{equation}
where $p=p(x,t,a)$ denotes the density of species of age $a$ at time $t$ and location $x$, $D_M,D_I$ are the diffusion rates, and $d_M, d_I$ are the death rates. Clearly, the total mature population $u$ and immature population $v$ at time $t$ and location $x$ can be represented, respectively, by the integrals
\[
u(t,x)=\int_{\tau(t)}^{+\infty} p(x,t,a)da, \quad v(t,x)=\int_0^{\tau(t)} p(x,t,a) da.
\]
Keeping a smooth flow for the paper, we write down here the model while leaving in the next section the derivation details, which are highly motivated by the ideas behind a few recent works. We will explain the details in the next few paragraphs. 
\begin{equation}\label{Model}
  \left\{
  \begin{aligned}
    &\frac{\partial u}{\partial t}=D_M(t)\frac{\partial^{2} u}{\partial x^{2}}-d_M(t)u+R(t,u(t-\tau(t), \cdot))(x),\\
    &\frac{\partial v}{\partial t}=D_{I}(t)\frac{\partial^{2} v}{\partial x^{2}}-d_{I}(t)v+b(t,u(x,t))-R(t,u(t-\tau(t), \cdot))(x),
  \end{aligned}
   \quad t>0, x\in\mathbb{R},
  \right.
\end{equation}
where $R$ in general is a nonlocal term, meaning the recruitment  rate of mature population at time $t$ and location $x$, and function $b$ is the birth rate.  The recruitment term $R$ turns out to have the following expression
\begin{equation}\label{recruitment}
R(t, \phi)(x)=(1-\tau'(t)) b(t-\tau(t),\phi)\ast k_I(t, t-\tau(t),\cdot)(x),
\end{equation}
where
\begin{equation}
k_I(t,t-\tau(t),x)=\frac{\varepsilon(t)}{\sqrt{4\pi \sigma(t)}}\exp\left\{ -\f{x^2}{4\sigma(t)}\right\}=:\varepsilon(t)f_{\sigma(t)}(x)
\end{equation}
is the Green function of $\partial_t \rho=D_I(t)\partial_{xx}-d_I(t), x\in\R$. Assumption (B5), i.e., $\tau'(t)< 1$, ensures the positivity and well-posedness of the recruitment term, in which, $(1-\tau'(t))\varepsilon(t)$ is the survival rate from newborn to being adult, $f_{\sigma(t)}$ is the redistribution kernel in space for newborns upon they become mature, and $\sigma(t)$, derived from the diffusion of immature population, measures how strong the nonlocal interaction (induced by the diffusibility of immature population) is. 

Let us recall some related works that motivate our study. In a pioneer paper on periodic delays, Freedman and Wu \cite{FreedmanWu92} studied the existence of a periodic solution of a single species population model 
\begin{equation}
u'(t)=u(t)[a(t)-b(t)u(t)+c(t)u(t-\tau(t))],
\end{equation}
where the positive functions $a,b,c,\tau$ are all $\omega$-periodic for some $\omega>0$.  Li and Kuang \cite{LiKuang01} investigated a two species Lotka-Volterra model with distributed periodic delays. Recently, an increasing attention has been paid to the emerging term $1-\tau'(t)$ in modeling the periodic/state-dependent development for juveniles. We refer to Barbarossa et al. \cite{BarbarossaEtal14}, Wu et al. \cite{WuEtal15} and Kloosterman et al. \cite{KloostermanEtal16} for the biological meanings in various scenarios.  Very recently,  the basic reproduction number theory has been developed by Lou and Zhao \cite{LouZhao17} for a large class of disease models with periodic delays, by employing which Wang and Zhao \cite{WangZhao17} analyzed a malaria model with temperature-dependent incubation period and Liu et al. \cite{LiuLouWu17} analyzed a tick population model subject to seasonal effects. 

By considering a single invasive species with two stages distinguished by age, So et al. \cite{SoWuZou01} formulated a reaction-diffusion model in $\R$ with constant time delay and spatially nonlocal interaction
\begin{equation}\label{model-sowuzou}
\partial_t u=\partial_{xx} u-d u+e^{-\gamma \tau}b(u(t-\tau,\cdot))\ast k_{\sigma(\tau)}, \quad x\in\R,
\end{equation}
where $u$ represents the density of mature population, the positive constant $\tau$ is the maturation period,  and the whole nonlocal term is the recruitment of mature population, accounting for the redistribution of survived juveniles at the time when they reach maturation. Such a mechanism generating nonlocal term has stimulated many developments in differential equations, dynamical systems and nonlinear analysis \cite{GourleyWu06}.  Under suitable conditions, it has been shown in \cite{LiangZhao07} that the invasion speed $c^*$ coincides with the minimal speed of traveling waves that is determined by the variational formula $c^*=\inf_{\mu>0} \f{\lambda(\mu)}{\mu}$, where $\lambda(\mu)$ is the principal eigenvalue of
\begin{equation}
\lambda=\mu^2-d+e^{-\gamma \tau}b'(0) k_{\sigma(\tau)} \ast e^{-\lambda (\tau+\cdot)}.
\end{equation}
It is proved by Li et al. \cite{LiRuanWang07} that $c^*$ is decreasing in delay $\tau$ (see also \cite{WangLiRuan08}).  

With the aforementioned works, a natural question then arises: {\it what if the periodic development rate is incorporated into the population model \eqref{model-sowuzou}? Further, how does the periodicity affect the invasion speed $c^*$?} For this purpose, we proposed the model \eqref{Model}, which is a generalized version of \eqref{model-sowuzou}.

If assuming all time periodic parameters are constants in \eqref{Model}, then one may retrieve \eqref{model-sowuzou}. If assuming only delay is a constant, then one has the model studied by Jin and Zhao \cite{JinZhao08}. If assuming that the mobility of immature population can be ignored, i.e., passing $\sigma(t)\to 0$, then one can see that $f_{\sigma(t)}$ tends to the Dirac measure and the nonlocal term $R$ reduces to the local form 
\begin{equation}
(1-\tau'(t))\varepsilon(t)b(t-\tau(t),u(t-\tau(t),x)).
\end{equation}
If assuming the mobility of mature population can be ignored, i.e., $D_M(t)\equiv 0$, then one has the following integro-differential equation for the mature population:
\begin{equation}
\frac{\partial u(t,x)}{\partial t}=-d_M(t)u(t,x)+R(t,u(t-\tau(t), \cdot))(x),
\end{equation}
which looks simpler than the diffusive equation but has more mathematical challenges due to the lack of regularity in spatial variable $x$.

Clearly, model \eqref{Model} is partially uncoupled. But it is still hard to fully understand how the time heterogeneity influences the propagation dynamics, especially the periodicity of time delay. In virtue of biological assumption (B2), we may cast the first equation of model \eqref{Model} for the mature population into an iterative system determined by its Poincar\'e map. This feature will be essentially of help for the mathematical analysis. As for the immature population, the second equation in model \eqref{Model} can be regarded as a linear time periodic reaction-diffusion equation with an inhomogeneous reaction term once the invasion of mature population is understood. To analyze such an equation, the following conservation equality will play an vital role. It comes from the biological observation that all newborns will become mature in the same year as when they were born.

\begin{equation}\label{conservation}
\int_0^T k_I(t,s,\cdot)\ast b(s,u(s,\cdot))ds=\int_{0}^T k_I(t,s,\cdot)\ast R(s,u(s-\tau(s),\cdot))ds, \quad t>T,
\end{equation}
where $k_I(t,s,x)$ is the Green function of $\partial_t \rho=D_I(t)\partial_{xx}\rho-d_I(t)\rho$.

The rest of this paper is organized as follows. In section 2, we formulate the model \eqref{Model} and reduce its first equation to the iterative system \eqref{EqQ=S+R}-\eqref{R}. In section 3, we investigate the global dynamics for the spatially homogeneous system of   \eqref{EqQ=S+R}-\eqref{R}. In section 4,  we first employ the dynamical system theories in \cite{FangZhao14, LiangYiZhao06, LiangZhao07, Weinberger82} to obtain the invasion speed $c^*$ and its coincidence with the minimal speed of traveling waves, and then we investigate the parameter influence on $c^*$, including the time periodicity. In the last section, we are back to model \eqref{Model} and establish the propagation dynamics with the help of \eqref{conservation}.

\section{Model}
In this section, we first formulate the reaction-diffusion model system \eqref{Model} from the growth law  \eqref{growthlaw} and then reduce it to the iterative system  \eqref{EqQ=S+R}-\eqref{R} by using the biological characteristics as assumed in the previous section. 

\subsection{Reaction-diffusion model with periodic delay}

Let $p(t,x,a)$ denote the population density of the species under consideration at time $t\geq0$, age $a\geq0$ and location $x\in\R$. According to assumption (B1), the total matured population at time $t$ and location $x$ is given by
\begin{equation}\label{mp}
    u(t,x)=\int_{\tau(t)}^{+\infty}p(t,x,a)da.
\end{equation}
It is natural to assume that $p(t,x,+\infty)=0$. Differentiating the both sides of \eqref{mp} in time yields
\begin{equation}\label{diffmp}
\f{\partial}{\partial t} u(t,x)=\int_{\tau(t)}^\infty \f{\partial}{\partial t} p(t,x,a)da-\tau'(t)p(t,x,\tau(t)).
\end{equation}
By using the growth law \eqref{growthlaw}, we obtain that
\begin{eqnarray}
\int_{\tau(t)}^\infty \f{\partial}{\partial t} p(t,x,a)da &&= \int_{\tau(t)}^\infty \left[-\f{\partial}{\partial a}+D_M(t)\f{\partial^2}{\partial x^2}-d_M(t)\right] p(t,x,a)da\nonumber\\
&&=D_M(t) \f{\partial^2}{\partial x^2} u(t,x)-d_M(t)u(t,x)+p(t,x,\tau(t)).
\end{eqnarray}
Consequently, \eqref{diffmp} becomes
\begin{equation}
\f{\partial}{\partial t} u(t,x)=D_M(t) \f{\partial^2}{\partial x^2} u(t,x)-d_M(t)u(t,x)+ (1-\tau'(t))p(t,x,\tau(t)).
\end{equation}
To obtain a closed form of the model, one needs to express $p(t,x,\tau(t))$ by $u$ in a certain way. Indeed, $p(t,x,\tau(t))$ represents the newly matured population at time $t$, and it is the evolution result of newborns at $t-\tau(t)$. That is, there is an evolution relation between the quantities $p(t,x,\tau(t))$ and $p(t-\tau(t),x,0)$. Such a relation is obeyed by the second equation of the growth law \eqref{growthlaw}. More precisely, the relation is the time-$\tau(t)$ solution map of the following evolution equation
\begin{equation}\label{evolution}
 \left\{
  \begin{aligned}
    &\frac{\partial q}{\partial s}=D_{I}(t-\tau(t)+s)\frac{\partial^{2}q}{\partial x^{2}}-d_{I}(t-\tau(t)+s)q,\quad x\in\mathbb{R},\ 0\leq s\leq \tau(t),\\
    &q(0,x)=p(t-\tau(t),x,0),\quad x\in\mathbb{R}.
  \end{aligned}
  \right.
\end{equation}
And the newborns $p(t-\tau(t),x,0)$ is given by then birth $b(t-\tau(t),u(x,t-\tau(t)))$. Next, by solving the linear  Cauchy problem \eqref{evolution} we obtain
\begin{equation}\label{Rform}
p(t,x,\tau(t))=q(\tau(t),x)=\f{R(t,u(t-\tau(t),\cdot))(x)}{1-\tau'(t)},
\end{equation}
where $R$ is defined in \eqref{recruitment}. Combining \eqref{diffmp} and \eqref{Rform}, we arrive at a reaction-diffusion equation for mature population $u$, that is, the first equation in the model \eqref{Model}.

As for the total immature population, it is defined by $v(t,x):=\int_0^{\tau(t)}p(t,x,a)da$. Then one may employ the same idea as above to derive another equation. Indeed, differentiating $v$ in time yields
\begin{equation}
\f{\partial}{\partial t}v(t,x)=\int_0^{\tau(t)} \f{\partial}{\partial t}p(t,x,a)da +\tau'(t)p(t,x,\tau(t)),
\end{equation}
which, thanks to the second equation of growth law \eqref{growthlaw}, becomes
\begin{equation}
\f{\partial}{\partial t} v(t,x)=D_I(t)\f{\partial^2}{\partial x^2} v(t,x)-d_Iv(t,x)-(1-\tau'(t))p(t,x,\tau(t)),
\end{equation}
which, together with \eqref{Rform}, gives rise to the second equation of model \eqref{Model} for the immature population.

\subsection{Reduction to a mapping}

Let $T$($=$ a year) be the period of all time dependent functions. Assume that $[\alpha,\beta]\subset (0,T)$ is the time duration of breeding season and $[t_\alpha,t_\beta]\subset (0,T)$ is the time duration of maturation season.  As assumed in (B2), the species has distinct breeding and maturation seasons, we then have the following relation
\begin{equation}
  [\alpha,\beta]\cap [t_{\alpha}, t_{\beta}]=\emptyset.
\end{equation}
Further, $t_{\alpha}$ and $t_{\beta}$ satisfy
\begin{equation}
t_{\alpha}-\tau(t_{\alpha})=\alpha,\quad t_{\beta}-\tau(t_{\beta})=\beta.
\end{equation}
Biologically it means that within a year all newborns are given in $[\alpha,\beta]$ and then they reach maturation in a coming season $[t_\alpha,t_\beta]$ within the same year. We refer to Fig. 1.1 for a schematic illustrations of the life cycle for this particular species. With this in mind, we infer that for $t\in[0,T]$,
\begin{equation}
\text{$b(t,\cdot)\equiv 0$ when $t\not\in[\alpha,\beta]$; $\quad$ $R(t,\cdot)\equiv 0$ when $t\not\in [t_\alpha, t_\beta]$.}
\end{equation}
This feature helps us to further infer that in the end of $n$-th year the mature population size $u(nT,x)$ only depends on its initial size $u((n-1)T,x)$ at the beginning of that year, that is, there is a relation $Q$ mapping $u((n-1)T,x)$  to $u(nT,x)$ for $n\ge 1$.

Next, we figure out the expression of $Q$ from an evolution viewpoint. During the year $[0,T]$, the mature population experience only natural death and the recruitment (with diffusion all the time), so we can decompose $Q$ into two parts:
\begin{equation}\label{EqQ=S+R}
Q[\varphi]=\mc{S}[\varphi]+\mc{R}[\varphi],
\end{equation}
where $\mc{S}$ means the survival part of initial value after a period of evolution and $\mc{R}$ is the new contribution at the end of this year by the next generation. Thus,
\begin{equation}\label{S}
\mc{S}[\varphi]=k_M(T,0,\cdot)\ast\varphi
\end{equation}
and
\begin{equation}\label{R}
\mc{R}[\varphi]=\int_{t_\alpha}^{t_\beta}k_M(T,s,\cdot)\ast R(s,k_M(s-\tau(s),0,\cdot)\ast \varphi) ds,
\end{equation}
where $k_M(t,s,x)$ is the Green function of $\partial_t\rho=D_M(t)\partial_{xx}\rho-d_M(t)\rho$. So far, we have got the complete form of $Q$. An alternative way to obtain the expression of $Q$ is to solve the first equation of \eqref{Model}.

The iterative system $\{Q^n\}_{n\ge 0}$ will be sufficient to determine the propagation dynamics of the mature population.

\section{Dynamics of the spatially homogeneous map}

Restricting $Q$ on $\R$ we obtain the map $\overline{Q}: \R\to\R$ defined by
\begin{equation}
\overline{Q}[z]=z\overline{k}_M(T,0)+\int_{t_\alpha}^{t_\beta} \overline{k}_M(T,s)R(s,z\overline{k}_M(s-\tau(s),0)ds,
\end{equation}
where $\overline{k}_M(t,s)=e^{-\int_s^t d_M(\omega)d\omega}$. Before analyzing the map $\bar{Q}$, let us first recall the mathematical assumptions implied by the biological concerns.
\begin{enumerate}
\item[(A1)] (Seasonality) $D_M\ge 0, D_I\ge 0, d_M>0, d_I>0,\tau>0, b\ge 0$ are all $C^1$ functions and $T-$periodic in time.
\item[(A2)] (Distinct breading and maturation seasons)  Assume that 
\begin{equation}
0 < \alpha \leq \beta < t_{\alpha} \leq t_{\beta} < T,
\end{equation}
where $t_{\alpha},t_{\beta}$ satisfy
\begin{equation}
t_{\alpha}-\tau(t_{\alpha})=\alpha,\quad t_{\beta}-\tau(t_{\beta})=\beta.
\end{equation}
Further, we assume that $b(t,u)=p(t)h(u)$, where $p\geq 0, h\geq 0$ and $p(t)\equiv 0$ for $t\in [0,\alpha]\cup [\beta,T]$.  
\item[(A3)](Ordering in maturation) $\tau'(t)<1, t\in\R$.
\end{enumerate}
We further assume that
\begin{enumerate}
\item[(A4)] (Unimodality) Assume that $h\in C^1$ with $h(0)=0=h(+\infty)$ and there exists $z^*>0$ such that $h(z)$ is increasing for $z\in [0, z^*)$ and decreasing for $z\in[z^*,+\infty)$. 
\item[(A5)](Sublinearity) $h(\lambda z)\ge \lambda h(z)$ for $z\ge 0$ and $\lambda\in (0,1)$.
\end{enumerate}
The Ricker type function $pze^{-qz}$ is a typical example of $h$ satisfying (A4) and (A5).

Define
\begin{equation}\label{delta}
L:= \f{\overline{k}_{M}(T,0)}{1-\overline{k}_{M}(T,0)}\int_{t_\alpha}^{t_\beta}\f{\partial_{\varphi} R(s,0)}{\overline{k}_M(s,s-\tau(s))}ds,
\end{equation}
where $R$ is defined in \eqref{Rform}. By a computation we have
\begin{equation}
\partial_{\varphi} R(s,0)=(1-\tau'(s))\varepsilon(s) p(s-\tau(s))h'(0).
\end{equation}

We are now ready to present the following threshold dynamics for the map $\overline{Q}:\R\to\R$.

\begin{theorem}\label{kinetic-dynamics-thm}
Assume that (A1)-(A5) hold. Then the following statements are valid:
 \begin{enumerate}
  \item[(i)] If $L>1$, then $\overline{Q}$ admits at least one positive fixed point. Denote the minimal one by $u^*$. Then $\lim_{n\to\infty} \overline{Q}^n[u]=u^*$ provided that $u\in(0, u^*]$ and $u^*\overline{k}_M(\alpha,0)\le z^*$, where $z^*$ is defined in (A4).
  \item[(ii)] If $L<1$, then $\lim_{n\to\infty}\overline{Q}^n[u]=0$ for $u\ge 0$.
   \end{enumerate}
\end{theorem}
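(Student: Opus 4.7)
The plan is to treat $\overline{Q}$ as a one-dimensional map of the form $\overline{Q}(z)=az+\int_{t_\alpha}^{t_\beta}g(s)\,h(c(s)z)\,ds$, with $a:=\overline{k}_M(T,0)\in(0,1)$, $c(s):=\overline{k}_M(s-\tau(s),0)>0$ and $g(s):=\overline{k}_M(T,s)(1-\tau'(s))\varepsilon(s)p(s-\tau(s))\ge 0$. First I would identify the number $L$ with the linearization at the origin: differentiating under the integral and using the identity $\overline{k}_M(T,s)\overline{k}_M(s-\tau(s),0)=\overline{k}_M(T,0)/\overline{k}_M(s,s-\tau(s))$ together with $\partial_\varphi R(s,0)=(1-\tau'(s))\varepsilon(s)p(s-\tau(s))h'(0)$, one obtains $\overline{Q}'(0)=a+(1-a)L$, so $\overline{Q}'(0)-1=(1-a)(L-1)$. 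Since $1-a>0$, $\mathrm{sgn}(\overline{Q}'(0)-1)=\mathrm{sgn}(L-1)$, and the whole theorem reduces to a linearized stability/instability dichotomy for a scalar map at $z=0$.

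For case (ii), the sublinearity assumption (A5) is equivalent to $y\mapsto h(y)/y$ being non-increasing on $(0,\infty)$, which yields $h(y)\le h'(0)y$ for all $y\ge 0$. Substituting into the integrand of $\overline{Q}$ gives $\overline{Q}(z)\le \overline{Q}'(0)\,z$ for every $z\ge 0$, and since $\overline{Q}'(0)<1$, iterating produces $0\le \overline{Q}^n(u)\le (\overline{Q}'(0))^n u\to 0$, which gives part (ii).

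For case (i), I would first establish existence of a positive fixed point by applying the intermediate value theorem to $F(z):=\overline{Q}(z)-z$. The condition $\overline{Q}'(0)>1$ gives $F>0$ on $(0,\delta]$ for some $\delta>0$, while unimodality (A4) implies that $h$ is bounded on $[0,\infty)$, so the integral term in $\overline{Q}$ is uniformly bounded in $z$; combined with the leading term $az$ with $a<1$, this forces $F(z)<0$ for all sufficiently large $z$. Hence the set $S:=\{z>0:F(z)=0\}$ is nonempty, closed in $(0,\infty)$, and bounded below by $\delta$, so $u^*:=\inf S>0$ is itself a fixed point, and by its minimality $\overline{Q}(z)>z$ for all $z\in(0,u^*)$.

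The delicate part, and the main obstacle, is global attractivity on $(0,u^*]$, because unimodality of $h$ prevents $\overline{Q}$ from being monotone globally. This is where the hypothesis $u^*\overline{k}_M(\alpha,0)\le z^*$ enters. For $s\in[t_\alpha,t_\beta]$ the relation $s-\tau(s)\in[\alpha,\beta]$ and the monotonicity of $\overline{k}_M(\cdot,0)$ give $c(s)\le \overline{k}_M(\alpha,0)$, so whenever $z\in[0,u^*]$ the argument $c(s)z$ lies in $[0,z^*]$, where (A4) makes $h$ non-decreasing. Hence $\overline{Q}$ is non-decreasing on $[0,u^*]$, the interval is forward invariant under $\overline{Q}$, and for $u\in(0,u^*]$ the orbit $\overline{Q}^n(u)$ is non-decreasing (because $\overline{Q}(z)>z$ on $(0,u^*)$) and bounded above by $u^*$. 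Its limit is a fixed point in $(0,u^*]$, which must equal $u^*$ by minimality, completing the proof.
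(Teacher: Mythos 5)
Your proposal is correct and follows essentially the same route as the paper: the identity $\overline{Q}'(0)=a+(1-a)L$ is just a reformulation of the paper's computation of $(\overline{Q}[z]-z)/(z\overline{k}_M(T,0))$ as $z\to0$, and both arguments then use the intermediate value theorem for existence, monotonicity of $\overline{Q}$ on $[0,u^*]$ under $u^*\overline{k}_M(\alpha,0)\le z^*$ for convergence to the minimal fixed point, and the sublinearity bound $h(y)\le h'(0)y$ for extinction when $L<1$. Your write-up merely makes explicit some steps the paper leaves to the reader (e.g.\ $c(s)=\overline{k}_M(s-\tau(s),0)\le\overline{k}_M(\alpha,0)$ and the closedness argument giving the minimal fixed point).
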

\begin{proof}
(i) By the expression of $R$, one has
\begin{equation}\label{comp1}
\lim_{z\to 0} \f{R(s,z\overline{k}_M(s-\tau(s),0)}{z\overline{k}_M(s-\tau(s))}=\partial_{\varphi} R(s,0) \quad \text{uniformly in $s\in [0,T]$.}
\end{equation}
Note that
\begin{eqnarray}\label{comp2}
&&\f{1}{z\overline{k}_M(T,0)}(\overline{Q}[z]-z)\nonumber\\ =&&1-\f{1}{\overline{k}_M(T,0)}+\int_{t_\alpha}^{t_\beta} \f{1}{\overline{k}_M(s,s-\tau(s))}\f{R(s,z\overline{k}_M(s-\tau(s),0))}{z\overline{k}_M(s-\tau(s),0)}ds\nonumber\\
\rightarrow&&
\begin{cases}
\left[1-\f{1}{\overline{k}_M(T,0)}\right] (1-L)>0, & \text{as $z\to0$, due to $L>1$,}\\
1-\f{1}{\overline{k}_M(T,0)}<0, & \text{as $z\to +\infty$}.
\end{cases}
\end{eqnarray}
Then the existence of positive fixed point follows from the intermediate value theorem. Since $L>1$, we have the minimal positive fixed point $u^*$. If additionally $u^*\overline{k}_M(\alpha, 0)\le z^*$, then one may check that $\overline{Q}[z]$ is non-decreasing in $z\in [0,u^*]$. Since $\overline{Q}[z]>z$ for $z\in(0,u^*)$, we have $\lim_{n\to\infty}\overline{Q}^n[z]$ exists. Clearly, the limit $z_\infty$ is a fixed point in $(0,u^*]$. So $z_\infty$ must be $u^*$.

(ii) From the computations \eqref{comp1} and \eqref{comp2}, we infer that
\begin{equation}
\f{1}{z\overline{k}_M(T,0)}(\overline{Q}[z]-z)\le \left[1-\f{1}{\overline{k}_M(T,0)}\right] (1-L)<0, \quad L<1.
\end{equation}
As such, there exists $\delta\in (0,1)$ such that $\overline{Q}[z]\le (1-\delta)z$ for $z>0$. Hence, $\overline{Q}^n[z]\le (1-\delta)^n z$, which converges to $0$ for $z>0$.
\end{proof}

\section{Propagation dynamics of the iterative system $\{Q^n\}_{n\ge 0}$}

Recall that
\begin{equation}
Q[\varphi]=k_M(T,0,\cdot)\ast\varphi+\int_{t_\alpha}^{t_\beta}k_M(T,s,\cdot)\ast R(s,k_M(s-\tau(s),0,\cdot)\ast \varphi )ds,
\end{equation}
where $k_M(t,s,x)$ is the Green function for the heat equation $\partial_t \rho=D_M(t)\partial_{xx}\rho-d_M(t)\rho$. Clearly, if $D_M\equiv 0$, i.e., the mature population does not move, then $k_M(t,s,x)=e^{-\int_s^t d_M(\omega)d\omega}$. In such a case, $Q$ is not compact. Thus, to include the case $D_M\equiv 0$ we will not assume any compactness for $Q$.

In this section, we shall first apply the dynamical system theory in \cite{Weinberger82, LiangZhao07} to establish the existence of invasion speed as well as its variational characterization by related eigenvalue problems. And then, we apply the results in \cite{FangZhao14} to obtain the existence of the minimal wave speed and its coincidence with the stabled invasion speed. To apply these theories, one needs to choose appropriate phase spaces. More precisely,  we will work in the continuous function space for the spreading speed and monotone function space for traveling waves.

\subsection{Existence of spreading speed $c^*$ and its coincidence with the minimal wave speed}
Let $\mathcal{C}:= BC(\mathbb{R};\mathbb{R})$, consisting of all bounded continuous functions from $\R$ to $\R$,  be endowed with the compact open topology, which can be induced by the following norm
\begin{equation}\label{EqNorm}
  \|u\|= \sum_{n=1}^{\infty}\frac{\sup_{|x|\leq n}|u(x)|}{2^{n}},\quad  u\in\mathcal{C}.
%d(u,v)=\Sigma_{n=1}^{\infty}\frac{\sup_{|x|\leq n}|u(x)-v(x)|}{2^{n}}\quad \forall u,v\in\mathcal{C}.
\end{equation}
Define $\mathcal{C}_+:=BC(\mathbb{R};\mathbb{R})$.  For $\phi,\psi\in \mathcal{C}$ we write $\phi\ge \psi$ provide that $\phi-\psi\in\mathcal{C}_+$. For the positive number $r$, we define $\mathcal{C}_r = \{v\in\mathcal{C}: 0 \leq v \leq r\}$. Given $y\in\mathbb{R}$, we define the translation operator $T_{y}$ by $T_{y}[v](x)=v(x-y)$. 

\begin{lemma}\label{SScondition}
Let $u^{\ast}$ be the minimal positive fixed point defined in Theorem \eqref{kinetic-dynamics-thm}. Assume that $h(u)$ is nondecreasing in $u\in[0,u^*]$. Assume that (A1)-(A5) hold and $L>1$. Then $Q:\mathcal{C}_{u^{\ast}}\rightarrow\mathcal{C}_{u^{\ast}}$ has the following five properties:
\begin{enumerate}
\item[(i)]$T_{y}Q = QT_{y}, y\in\mathbb{R}$; 
\item[(ii)]$Q$ is continuous with respect to the compact open topology; 
\item[(iii)]$Q$ is order preserved in the sense that $Q[u]\geq Q[v]$ whenever $u\geq v$ in $\mathcal{C}_{u^{\ast}}$; 
\item[(iv)]$\overline{Q}:[0,u^{\ast}]\to[0,u^{\ast}]$ admits two fixed points $0$ and $u^{\ast}$, and for any $\gamma\in(0,u^{\ast})$ one has $Q[\gamma]>\gamma$.
\item[(v)]$Q[\lambda\phi]\ge \lambda Q[\phi], \phi\in \mathcal{C}_{u^*}, \lambda\in(0,1)$.
\end{enumerate}
\end{lemma}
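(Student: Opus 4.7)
\textbf{Plan for Lemma \ref{SScondition}.} The plan is to verify the five properties in roughly the order they are stated, grouping the easy structural ones together and isolating the compact-open continuity as the one technical step.

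First I would dispose of (i) and (v), which are immediate from the explicit formula for $Q$. Translation invariance holds because each building block of $Q$ -- the outer convolution with $k_M(T,s,\cdot)$, the convolution with $k_I$ hidden inside $R$, and the pointwise evaluation of $h$ -- commutes with spatial translation, and composing translation-equivariant operators yields a translation-equivariant operator. Subhomogeneity then follows because every convolution is linear, while assumption (A5) gives $h(\lambda z)\ge \lambda h(z)$; pulling a factor of $\lambda$ through the definition of $R$ and then through the outer convolution yields $Q[\lambda\phi]\ge \lambda Q[\phi]$.

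Next I would tackle (iii). The key observation is that for $\varphi\in\mathcal{C}_{u^*}$ the inner convolution satisfies
\begin{equation*}
0\le k_M(s-\tau(s),0,\cdot)*\varphi\le \overline{k}_M(s-\tau(s),0)\,u^*\le u^*
\end{equation*}
pointwise, so the argument of $h$ inside $R$ always lies in the monotonic range $[0,u^*]$ provided by the extra hypothesis. Positivity of the Gaussian kernels $k_M$ and $k_I$ then propagates order through both convolutions, and integration over $s$ preserves inequalities, giving $Q[u]\ge Q[v]$ whenever $u\ge v$. As a by-product, plugging in the constant $u^*$ together with $\overline{Q}[u^*]=u^*$ shows that $Q$ maps $\mathcal{C}_{u^*}$ into itself, which is needed for the statement to make sense. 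For (iv), the two fixed points $0$ and $u^*$ are inherited directly from Theorem \ref{kinetic-dynamics-thm}, since constant initial data is preserved by $Q$ and the restriction of $Q$ to constants is exactly $\overline{Q}$. The strict inequality $Q[\gamma]>\gamma$ for constants $\gamma\in(0,u^*)$ reduces to $\overline{Q}[\gamma]>\gamma$, which is already established en route to Theorem \ref{kinetic-dynamics-thm}(i) by the intermediate value argument and the minimality of $u^*$.

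The main obstacle is property (ii): continuity of $Q$ in the compact open topology. The plan is to show that convolution with a Gaussian kernel sends sequences converging uniformly on compact sets to sequences converging uniformly on compact sets, and then to bootstrap this through the nonlinear step. Given a compact set $K\subset\R$, a sequence $\varphi_n\to\varphi$ uniformly on compact sets, and $\varepsilon>0$, I would split the convolution integral over a large ball $B_R$, where uniform convergence of $\varphi_n$ on a slightly enlarged compact set controls the contribution, and a tail outside $B_R$, where the Gaussian decay combined with the uniform bound $\|\varphi_n\|_\infty\le u^*$ makes the contribution arbitrarily small independently of $n$. For the nonlinear step, $h$ is $C^1$ and hence Lipschitz on the compact range $[0,u^*]$, so the substitution $z\mapsto R(s,z)$ is continuous in the sup norm on any compact set, uniformly in $s\in[t_\alpha,t_\beta]$. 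Finally, dominated convergence over the compact $s$-interval $[t_\alpha,t_\beta]$ closes the argument. The delicate point throughout is that the compact open topology does not control $\varphi$ at infinity, so the Gaussian tail estimate, rather than uniform continuity of $\varphi$, is what forces the far-field contribution to be negligible.
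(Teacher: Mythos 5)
Your proposal is correct and follows essentially the same route as the paper: items (i), (iii)--(v) are dispatched as direct consequences of the explicit formula for $Q$, the monotonicity of $h$ on $[0,u^*]$, Theorem \ref{kinetic-dynamics-thm} and (A5), while the continuity in (ii) rests on exactly the paper's key device of splitting each convolution into a near part controlled by uniform convergence on compact sets and a far part controlled by the $L^1$ (Gaussian) tail of the kernels together with the uniform bound $u^*$. The only cosmetic differences are that you obtain the Lipschitz control of the nonlinearity from $h\in C^1$ on $[0,u^*]$ instead of the paper's (A5)-based linear domination by $\partial_\varphi R(s,0)$, and you propagate convergence through the operator step by step rather than collapsing everything into the single $L^1$ kernel $K$; neither changes the substance of the argument.
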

\begin{proof}
Item (i) is obvious. Item (iii) follows from the monotonicity of $h$ on $[0,u^*]$. Item (iv) follows from Theorem \eqref{kinetic-dynamics-thm}. Item (v) follows from (A5). It then remains to check item (ii), that is, we need to check $Q[\phi_n]\to Q[\phi]$ as $\phi_n\to\phi$ in $\mathcal{C}_{u^\ast} $. In virtue of (A5), we have
\[
\begin{split}
&|R(s,k_M(s-\tau(s),0,\cdot)\ast \phi_n)(x)- R(s,k_M(s-\tau(s),0,\cdot)\ast\phi)(x)| \\  &\leq \partial_\varphi R(s,0)k_M(s-\tau(s),0,\cdot)\ast f_{\sigma(s)}\ast |\phi_n-\phi|(x).
\end{split}
\]
Hence, 
\begin{equation}
|Q[\phi_n](x) - Q[\phi](x)|\le k_M(T,0,\cdot)\ast|\phi_n- \phi|(x)+ K \ast |\phi_n- \phi|(x),
\end{equation}
where $K$ is defined by
\[
K(x) := \int_{t_\alpha}^{t_\beta} \partial_\varphi R(s,0) k_M(T,s,\cdot)\ast f_{\sigma(s)}\ast k_M(s-\tau(s),0,\cdot)(x)ds.
\]
It is not difficult to see that $K\in L^1$. Therefore, 
\begin{eqnarray}
&&\|Q[\phi_n] - Q[\phi]\|\nonumber\\
=&&\sum_{k=1}^\infty 2^{-k} \sup_{|x|\le k} |Q[\phi_n](x) - Q[\phi](x)|\nonumber\\
\le && \sum_{k=1}^\infty 2^{-k} \sup_{|x|\le k}  [k_M(T,0,\cdot)+K]\ast|\phi_n- \phi|(x)\nonumber\\
=&& \sum_{k=1}^\infty 2^{-k} \sup_{|x|\le k}  \left\{\left(\int_{|y|\ge C}+\int_{|y|\le C}\right) [k_M(T,0,y)+K(y)]|\phi_n- \phi|(x-y)dy\right\}\nonumber\\
\le && 2u^* \int_{|y|\ge C}[k_M(T,0,y)+K(y)]dy \nonumber\\
&&+ \sum_{k=1}^\infty 2^{-k}\sup_{|x|\le k} \int_{|y|\le C} [k_M(T,0,y)+K(y)]|\phi_n- \phi|(x-y)dy\nonumber\\
\le && 2u^* \int_{|y|\ge C}[k_M(T,0,y)+K(y)]dy + 2C\sum_{k=1}^\infty 2^{-k} \sup_{|x|\le k+C} |\phi_n- \phi|(x)\nonumber\\
=&& 2u^* \int_{|y|\ge C}[k_M(T,0,y)+K(y)]dy + 2^{C+1}C \|\phi_n-\phi\| ,\quad C>0.
\end{eqnarray}
Consequently, 
\begin{equation}
\limsup_{n\to\infty}\|Q[\phi_n]-Q[\phi]\|\le  2u^* \int_{|y|\ge C}[k_M(T,0,y)+K(y)]dy, \quad C>0.
\end{equation}
Since $C$ is arbitrary and $k_M(T,0,\cdot)+K\in L^1$, we obtain the continuity in item (ii). 
\end{proof}
%%%%%%%%%%%%%End proof of the lemma

Now we are ready to apply Theorems 2.11, 2.15 and 2.16 and Corollary 2.16 in \cite{LiangZhao07} to get the following result on the existence of spreading speed $c^*$ and its variational formula. 
\begin{theorem}\label{TheoremSpeedOfMonotoneMap}
Assume that all conditions assumed in Lemma \ref{SScondition} are satisfied.  Define
\begin{eqnarray}\label{cstar}
  c^\ast := \inf_{\mu>0} \Phi(\mu):= \inf_{\mu>0}\frac{1}{\mu} \ln\left\{ \int_{\mathbb{R}} e^{\mu y} \mathcal{K}(y)dy \right\},
\end{eqnarray}
where $\mathcal{K}$ is defined by
\begin{equation}\label{EqMathcal-K}
\mathcal{K}(x) := k_M(T,0,x)+ \int_{t_\alpha}^{t_\beta} \partial_\varphi R(s,0) k_M(T,s,\cdot)\ast k_I(s,s-\tau(s),\cdot)\ast k_M(s-\tau(s),0,\cdot)(x)ds.
\end{equation}
Then $c^*>0$ and $c^*=\Phi(\mu^*)$ for some $\mu^*>0$. And the following statements are valid:
\begin{enumerate}
  \item[(i)] if $\varphi\in\mathcal{C}_{u^{\ast}}$ has compact support, then $\lim_{n\rightarrow\infty}\sup_{|x|\geq cn} Q^n[\varphi](x) = 0$ for $c>c^{\ast}$. 
\item[(ii)] if  $\varphi\in\mathcal{C}_{u^{\ast}}$ and $\varphi\not\equiv 0$, then $\lim_{n\rightarrow\infty}\inf_{|x|\leq cn} |Q^n[\varphi](x)-u^{\ast}| = 0$ for $c\in (0,c^{\ast})$.
\end{enumerate}
\end{theorem}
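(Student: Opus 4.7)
The whole result is a direct application of the monotone semiflow spreading speed theory of \cite{LiangZhao07, Weinberger82} to the map $Q$, once the abstract hypotheses are translated into the concrete kernel $\mathcal{K}$. Lemma \ref{SScondition} already supplies the five structural ingredients (translation invariance, compact-open continuity, monotonicity, the ordered fixed points $0$ and $u^{\ast}$, and the subhomogeneity coming from (A5)) required by Theorems 2.11, 2.15 and 2.17 of \cite{LiangZhao07}; moreover item (v) together with (A5) places us inside the linearly determined regime of Corollary 2.16 of that paper. Granting this, the existence of a spreading speed as well as the convergence statements (i) and (ii) are immediate, so the real work is to identify the Fr\'echet derivative $M:=Q'[0]$ and to verify that its variational formula is exactly \eqref{cstar} with $c^{\ast}>0$ attained.

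To identify $M$, I would differentiate the recruitment functional $R$ at $\varphi=0$: by \eqref{recruitment} together with $b(t,u)=p(t)h(u)$ this yields the convolution operator $\partial_\varphi R(s,0)[\psi] = (1-\tau'(s))p(s-\tau(s))h'(0)\bigl(k_I(s,s-\tau(s),\cdot)\ast\psi\bigr)$. Substituting this into $Q$ and applying Fubini to interchange the $s$-integral with the spatial convolutions produces $M[\varphi]=\mathcal{K}\ast\varphi$ with $\mathcal{K}$ as in \eqref{EqMathcal-K}. A short calculation then shows that $M$ acts on the positive exponential $e^{-\mu x}$ as $M[e^{-\mu\cdot}](x)=\widehat{\mathcal{K}}(\mu)\,e^{-\mu x}$, where $\widehat{\mathcal{K}}(\mu):=\int_{\R} e^{\mu y}\mathcal{K}(y)\,dy$, so the principal eigenvalue of the linearization along the ray of rate $\mu$ is $\widehat{\mathcal{K}}(\mu)$ and the linearly determined speed reduces to $c^{\ast}=\inf_{\mu>0}\mu^{-1}\ln\widehat{\mathcal{K}}(\mu)=\inf_{\mu>0}\Phi(\mu)$.

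It then remains to show that $c^{\ast}>0$ and that the infimum is attained at some $\mu^{\ast}>0$. Since $k_M$ and $k_I$ are Gaussian, $\widehat{\mathcal{K}}(\mu)$ is finite for every $\mu\in\R$ and grows like $e^{A\mu^2}$ as $\mu\to\infty$ for a positive $A$ determined by the diffusivities, so $\Phi(\mu)\to+\infty$ at infinity. At the other end, $\widehat{\mathcal{K}}(\mu)\to\int_{\R}\mathcal{K}$ as $\mu\to 0^+$, and integrating out $x$ (using the known total masses of $k_M$ and $k_I$) identifies $\int_{\R}\mathcal{K}>1$ with the threshold $L>1$ defined in \eqref{delta}; hence $\Phi(\mu)\to+\infty$ also as $\mu\to 0^+$. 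Continuity of $\Phi$ on $(0,\infty)$ therefore yields a minimizer $\mu^{\ast}>0$ with $c^{\ast}=\Phi(\mu^{\ast})>0$.

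The main obstacle, in my view, is not the abstract theory itself but the noncompactness of $Q$ when $D_M\equiv 0$, which removes any spatial smoothing in the mature class and rules out a direct appeal to classical Weinberger-style arguments. This is precisely why Lemma \ref{SScondition}(ii) is framed in the compact-open topology and reduced to an $L^1$ tail estimate on $k_M(T,0,\cdot)+K$, and why the Liang--Zhao framework (designed for compact-open continuous monotone semiflows) is the appropriate setting. A secondary subtlety is ensuring that the infimum in \eqref{cstar} is genuinely attained rather than approached only at the boundary: this requires both the Gaussian tails of $\mathcal{K}$ (for $\Phi\to\infty$ as $\mu\to\infty$) and the supercriticality $L>1$ (for $\Phi\to\infty$ as $\mu\to 0^+$), so the diffusivity of at least the immature stage and the fertility threshold are both used in an essential way.
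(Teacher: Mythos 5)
Your proposal is correct and follows the paper's overall strategy: Lemma \ref{SScondition} feeds the monotone-map theory of Liang--Zhao, and the only genuine work left is the positivity and attainment of the infimum defining $c^{\ast}$. Where you diverge is in that last step, and your route is arguably cleaner. The paper expands $e^{\mu y}$ in a power series, uses the symmetry of $\mathcal{K}$ to get $\int e^{\mu y}\mathcal{K}(y)\,dy>1+\tfrac{\mu^{2}}{2}\int y^{2}\mathcal{K}(y)\,dy>1$ (hence $\Phi>0$), and then cites \cite[Lemma 3.8]{LiangZhao07} after checking $\Phi(\mu)\to\infty$ as $\mu\to\infty$; its displayed limit computation at that point is in fact only justified by the Gaussian (quadratic-in-$\mu$ exponential) growth of $\widehat{\mathcal{K}}$, which is exactly what you invoke. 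You instead argue directly at both ends: $\Phi\to\infty$ as $\mu\to0^{+}$ because $\widehat{\mathcal{K}}(0)=\int\mathcal{K}>1$, which you correctly identify with $L>1$ (indeed $\int\mathcal{K}=\overline{k}_M(T,0)+(1-\overline{k}_M(T,0))L$ by the semigroup property of $\overline{k}_M$), and $\Phi\to\infty$ as $\mu\to\infty$ from the Gaussian tails; continuity then yields $\mu^{\ast}$. Two small points to patch: (a) attainment alone does not give $c^{\ast}>0$; add the one-line observation that, by symmetry, $\widehat{\mathcal{K}}(\mu)=\int\cosh(\mu y)\mathcal{K}(y)\,dy\ge\int\mathcal{K}>1$ for every $\mu>0$ (or use \eqref{computation}, whose integrand is nondecreasing in $\mu^{2}$), so $\Phi>0$ everywhere; (b) both your argument and the paper's implicitly require some diffusion ($\int_0^T D_M>0$ or $\sigma(\cdot)>0$ on the maturation window) for the $\mu\to\infty$ blow-up, which is harmless but worth stating since (A1) only assumes $D_M,D_I\ge0$. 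Your explicit identification of $Q'[0][\varphi]=\mathcal{K}\ast\varphi$ and its action on exponentials is left implicit in the paper but is consistent with it.
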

\begin{proof}
According to Lemma \ref{SScondition}, we know that $Q: \mathcal{C}_{u^*}\to \mathcal{C}_{u^*}$ satisfies all the conditions in \cite[Theorems 2.11, 2.15 and 2.16, and Corollary 2.16]{LiangZhao07}. Therefore, we have reached the conclusion except for that $c^*>0$  and it can be achieved at some $\mu^*$. Indeed, since $\mathcal{K}$ is symmetric and decays super exponentially, it then follows from the expansion $e^x=\sum_{n=0}^\infty \f{x^n}{n!}$ and Fubini's theorem that 
\begin{equation}
\int_\R  e^{\mu y} \mathcal{K}(y)dy =\int_\R \sum_{n=0}^\infty \f{\mu^ny^n}{n!}  \mathcal{K}(y)dy=\sum_{n=0}^\infty \int_\R \f{\mu^ny^n}{n!}  \mathcal{K}(y)dy=\sum_{n=0}^\infty \int_\R \f{\mu^{2n}y^{2n}}{(2n)!}  \mathcal{K}(y)dy.
\end{equation}
\begin{equation}
\int_\R  e^{\mu y} \mathcal{K}(y)dy > 1+\f{\mu^2}{2} \int_\R y^{2}\mathcal{K}(y)dy>1,
\end{equation}
which implies that $ \Phi(\mu)>0$ for $\mu>0$. By  \cite[Lemma 3.8]{LiangZhao07}, it then suffices to check $\lim_{\mu\to+\infty}\Phi(\mu)=+\infty$. Note that
\begin{equation}
\liminf_{\mu\to+\infty} \Phi(\mu)\ge \lim_{\mu\to+\infty} \ln  \left(1+\f{\mu^2}{2} \int_\R y^{2}\mathcal{K}(y)dy\right)^{\f{1}{\mu}}= \lim_{\mu\to+\infty}\f{\mu}{2}  \int_\R y^{2}\mathcal{K}(y)dy=+\infty.
\end{equation}
The proof is complete.
\end{proof}

As a consequence of the invasion speed, we see from \cite[Theorem 4.1]{LiangZhao07} that any possible speed of traveling waves is not less than the spreading speed $c^*$. 

Next we show that there exists traveling wave with $c\ge c^*$, i.e., the spreading speed $c^*$ is also the minimal wave speed. Indeed,  \cite[Theorem 4.2]{LiangZhao07} applies to the iterative system $\{Q^n\}_{n\ge 1}$ if $Q$ is compact. However, as explained before, when the mature population does not move, then the result mapping $Q$ is not compact. To include such a case, instead of  \cite[Theorem 4.2]{LiangZhao07} we shall employ an improved version with weaker compactness assumptions, for which we refer to  \cite[Theorem 3.8]{FangZhao14}. In particular, for the standing mapping $Q$, we do not need to impose any compactness condition, but the phase space is chosen to be the monotone function space
\begin{equation}
\mathcal{M}:=\{\phi:\R\to\R| \phi(x)\ge \phi(y), x\le y\},
\end{equation}
which is endowed with the compact open topology.  Similarly, we may define the ordering in $\mathcal{M}$ and the order interval $\mathcal{M}_{u^*}$.

\begin{theorem}\label{TW}
Assume that all the conditions in Lemma \ref{SScondition} hold. Then the spreading speed $c^*$ is also the minimal speed of traveling waves connecting $0$ to $u^*$. 
\end{theorem}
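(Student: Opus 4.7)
The plan is to apply \cite[Theorem 3.8]{FangZhao14}, which extends the Liang--Zhao traveling-wave machinery to monotone iterative systems that are \emph{not} required to be compact---precisely the situation we face whenever $D_M\equiv 0$. I would first restrict $Q$ to the monotone order interval
\[
\mathcal{M}_{u^*}=\{\phi\in\mathcal{M}: 0\le\phi\le u^*\}
\]
and verify $Q(\mathcal{M}_{u^*})\subset\mathcal{M}_{u^*}$. The upper bound is inherited from Lemma \ref{SScondition}(iii)--(iv) by comparison with the constant $u^*$, while preservation of monotonicity is immediate: $Q$ acts on $\phi$ through convolutions with non-negative kernels and through the composition with $R(s,\cdot)$, which is nondecreasing on $[0,u^*]$ because $h$ is so by hypothesis. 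The five structural properties of Lemma \ref{SScondition}---translation invariance, compact-open continuity, order preservation, the fixed-point pair $\{0,u^*\}$, and sub-homogeneity---then transfer directly from $\mathcal{C}_{u^*}$ to $\mathcal{M}_{u^*}$.

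The decisive new ingredient required by \cite[Theorem 3.8]{FangZhao14} is a weak compactness hypothesis, usually phrased either as a point-$\alpha$-contraction condition or as a decomposition $Q=Q_1+Q_2$ into a compact map plus a strict linear contraction. I would supply it through the natural splitting
\[
Q[\phi](x)=\underbrace{(k_M(T,0,\cdot)\ast\phi)(x)}_{Q_1[\phi](x)}+\underbrace{\mathcal{R}[\phi](x)}_{Q_2[\phi](x)},
\]
with $\mathcal{R}$ as in \eqref{R}. When $D_M\not\equiv 0$ the Gaussian $k_M(T,0,\cdot)$ smooths, so $Q_1$ is compact on bounded sets in the compact-open topology; when $D_M\equiv 0$ it degenerates to $Q_1[\phi]=\overline{k}_M(T,0)\phi$, a strict linear contraction with factor $\overline{k}_M(T,0)\in(0,1)$ thanks to $d_M>0$. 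In both cases $Q_2=\mathcal{R}$ involves convolution with the smooth kernel $k_I(s,s-\tau(s),\cdot)=\varepsilon(s)f_{\sigma(s)}$, and the $L^1$ tail estimate already exploited in the proof of Lemma \ref{SScondition}(ii), combined with a tightness argument at infinity, delivers compactness of $Q_2$ on bounded subsets of $\mathcal{M}_{u^*}$ in the compact-open topology.

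With these hypotheses verified, \cite[Theorem 3.8]{FangZhao14} yields, for every $c\ge c^*$, a non-increasing profile $U\in\mathcal{M}_{u^*}$ satisfying $U(-\infty)=u^*$, $U(+\infty)=0$ and $Q[U](\cdot)=U(\cdot-c)$, which is exactly a monotone periodic traveling wave connecting $0$ to $u^*$. To exclude speeds $c<c^*$ I would replay the standard argument from \cite[Theorem 4.1]{LiangZhao07}: such a wave $U(\cdot-c)$ would, by translation and comparison, dominate the iterates $Q^n[\varphi]$ for a suitably chosen compactly supported $\varphi\in\mathcal{C}_{u^*}$, yet for any $c'\in(c,c^*)$ Theorem \ref{TheoremSpeedOfMonotoneMap}(ii) forces those iterates to approach $u^*$ uniformly on $|x|\le c'n$, contradicting $U(+\infty)=0$ evaluated along $x=c'n$.

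\textbf{Main obstacle.} The delicate step will be establishing the point-$\alpha$-contraction property in the compact-open topology rather than in a Banach norm, especially in the degenerate case $D_M\equiv 0$; it requires a careful tightness estimate at spatial infinity showing that the family $\{\mathcal{R}[\phi]:\phi\in\mathcal{M}_{u^*}\}$ is effectively compactly supported modulo arbitrarily small error, so that compactness on $\mathbb{R}$ can be reduced to compactness on bounded intervals.
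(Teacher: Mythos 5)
Your skeleton is the paper's: restrict $Q$ to the monotone order interval $\mathcal{M}_{u^*}$, transfer the structural properties of Lemma \ref{SScondition}, invoke \cite[Theorem 3.8]{FangZhao14} to get waves for every $c\ge c^*$, and exclude $c<c^*$ via the spreading speed (the paper simply cites \cite[Theorem 4.1]{LiangZhao07}, which is equivalent to the comparison argument you sketch). The divergence is in which hypothesis you spend your effort on. The entire reason the paper moves to the monotone function space --- as it states just before Theorem \ref{TW} --- is that \emph{no} compactness condition on $Q$ is needed there: for this scalar problem the weak-compactness requirement of \cite{FangZhao14} concerns the point values, which lie in the bounded interval $[0,u^*]\subset\R$ and are therefore trivially precompact, and the compactness of families of profiles is supplied by Helly's selection theorem for uniformly bounded monotone functions, not by any smoothing property of $Q$. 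So your decomposition $Q=Q_1+Q_2$ (compact part plus contraction) is beside the point, and it is also not robust: if both $D_M\equiv 0$ and $D_I\equiv 0$ (allowed by (A1)), neither piece is compact, whereas the paper's argument is indifferent to this. Your announced ``main obstacle'' is likewise a red herring: precompactness in the compact-open topology needs only uniform boundedness and equicontinuity on compact sets, so no tightness estimate at spatial infinity is required.

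What your proposal omits is the check the paper actually performs, which is the price one pays for using Helly's theorem: pointwise limits of monotone functions need not be continuous, so \cite[Theorem 3.8 and Remark 3.7]{FangZhao14} replaces compactness by a mild regularity condition, verified in the paper by showing that $Q$ maps left-continuous elements of $\mathcal{M}_{u^*}$ to left-continuous elements (because $k_M(t,s,\cdot)\ast\varphi$ is left continuous whenever $\varphi$ is, and the recruitment term behaves the same way). Relatedly, since $\mathcal{M}_{u^*}\not\subset\mathcal{C}_{u^*}$ (monotone profiles may jump), the five properties of Lemma \ref{SScondition} do not literally ``transfer directly''; in particular continuity of $Q$ in the compact-open topology must be rechecked on $\mathcal{M}_{u^*}$, although the same $L^1$ kernel estimate used in Lemma \ref{SScondition}(ii) still does the job. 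With the left-continuity verification added, and the compactness discussion either deleted or demoted to a remark, your argument coincides with the paper's proof.
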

\begin{proof}
It is not difficult to check that the first four item of conclusions in Lemma \ref{SScondition} still hold with $\mathcal{C}_{u^*}$ being replaced by $\mathcal{M}_{u^*}$. Next we check that $Q:\mathcal{M}_{u^{\ast}}\to \mathcal{M}_{u^{\ast}}$ maps left continuous function to left continuous functions. Indeed, for $\varphi\in\mathcal{M}_{u^{\ast}}$ and $t>s\geq0$, if $\varphi$ is left continuous, then $k_M(t,s,\cdot)\ast\varphi$ is left continuous, so is $Q[\varphi]$. Then we can employ \cite[Theorem 3.8 and Remark 3.7]{FangZhao14} to obtain that $c^*$ is the minimal wave speed.
\end{proof}

In order to study the parameter influence on $c^*$, we use the explicit form of Green's function $k_M(t,s,x)$ of  $\partial_t\rho=D_M(t)\partial_{xx}\rho-d_M(t)\rho$ to compute the integral $\int_\R e^{\mu y}\mc{K}(y)dy$, where $\mc{K}$ is defined in \eqref{EqMathcal-K}. Indeed, it is known that 
\[
k_M(t,s,x)=\frac{1} {\sqrt{4\pi\int^{t}_{s}D_M(\varsigma)d\varsigma}} \exp\left\{-\frac{x^{2}}{4\int^{t}_{s}D_M(\varsigma)d\varsigma}-\int^{t}_{s}d_M(\varsigma) d\varsigma\right\}.
\]
Note that
\[
\int_\R e^{\mu y} k_M(t,s,y)dy= \exp\left\{  \int_s^t [\mu^2D_M(\varsigma)-d_M(\varsigma)] d\varsigma \right\}.
\]
By the expressions of $\mc{K}$ defined in \eqref{EqMathcal-K}, we compute to have
\begin{eqnarray}\label{computation}
\int_\R e^{\mu y}\mc{K}(y)dy
=&& e^{\int_0^T[\mu^2D_M(\varsigma)-d_M(\varsigma)] d\varsigma}\nonumber\\
&&+\int_{t_\alpha}^{t_\beta}\partial_{\varphi}R(s,0) e^{\left(\int_s^T+\int_0^{s-\tau(s)}\right) [\mu^2D_M(\varsigma)-d_M(\varsigma)] d\varsigma+\mu^2\sigma(s)}ds.
\end{eqnarray}
This formula will be used several times in the rest of this section.

\subsection{Influence of periodic delay $\tau(t)$ on $c^*$} In this subsection, we compare the spreading speeds subject to two different maturation periods; one depends on time, the other is its average.  For this purpose, we define 
\begin{equation}\label{EqAverageOfTau(s)}
\tau_{av}:=\f{1}{t_\beta - t_\alpha}\int_{t_\alpha}^{t_\beta} \tau(s)ds.
\end{equation}
To focus on the influence of time periodicity of delay, we assume all other time periodic functions are trivial.  As such, $D_M(s)\equiv D_M, D_I(s)\equiv D_I$ and 
\[
\partial_{\varphi}R(s,0)=(1-\tau'(s))e^{-d_I \tau(s)} ph'(0).
\] 
Now we have two spreading speeds: $c^*(\tau)$ and $c^*(\tau_{av})$. To compare them, it suffices to compare the following two integrals thanks to the variational formula of $c^*$ in \eqref{cstar} and \eqref{computation}:
\begin{equation}\label{twoterm}
\int_{t_{\alpha}}^{t_{\beta}} (1-\tau'(s)) e^{l(\mu)\tau(s)}ds\quad\text{and}\quad \int_{t_{\alpha}}^{t_{\beta}} e^{l(\mu)\tau_{av}}ds,
\end{equation}
where $l(\mu)=(d_M-d_I)+\mu^{2}(D_I-D_M)$. Next, by Jensen's inequality and the fact $1-\tau'(s)>0$ as assumed in (B5), we have
\begin{equation}\label{ineq1}
\ln\left(\f{1}{t_\beta-t_\alpha}\int_{t_{\alpha}}^{t_{\beta}} (1-\tau'(s)) e^{l(\mu)\tau(s)}ds\right)\le \f{1}{t_\beta-t_\alpha}\int_{t_{\alpha}}^{t_{\beta}} \ln \left(  (1-\tau'(s)) e^{l(\mu)\tau(s)} \right)ds.
\end{equation}
Further, since
\begin{equation}\label{ineq2}
\ln \left(  (1-\tau'(s)) e^{l(\mu)\tau(s)} \right)=\ln (1-\tau'(s))+l(\mu)\tau(s),
\end{equation}
it then follows that
\begin{equation}
\int_{t_{\alpha}}^{t_{\beta}} (1-\tau'(s)) e^{l(\mu)\tau(s)}ds\le \int_{t_{\alpha}}^{t_{\beta}} e^{l(\mu)\tau_{av}}ds
\end{equation}
provided that
\begin{equation}\label{ineq3}
\int_{t_{\alpha}}^{t_{\beta}}  \ln (1-\tau'(s))ds\le 0.
\end{equation}
Using the inequality $\ln x\le x-1, x>0$ we infer that \eqref{ineq3} holds if 
\begin{equation}
\int_{t_{\alpha}}^{t_{\beta}}-\tau'(s)ds\le 0,
\end{equation}
that is,
\begin{equation}\label{ineq4}
\tau(t_\alpha)\le \tau(t_\beta).
\end{equation}
Thus, $c^*(\tau)\le c^*(\tau_{av})$ provided that \eqref{ineq4} holds.

On the other hand, to expect $c^*(\tau)\ge c^*(\tau_{av})$ one has to assume that $\tau(s)$ is decreasing for $s$ in some intervals. For this purpose, we consider the case where $\tau(s)$ is a nonincreasing linear function, say
\begin{equation}
\tau(s):=\theta_1-\theta_2 s,\quad \theta_2\ge 0, s\in [t_\alpha,t_\beta].
\end{equation}
As such, the first term in \eqref{twoterm} can be estimated from below as follows:
\begin{eqnarray}
\int_{t_{\alpha}}^{t_{\beta}} (1-\tau'(s)) e^{l(\mu)\tau(s)}ds && \ge (1+\theta_2) \int_{t_{\alpha}}^{t_{\beta}} e^{l(\mu)(\theta_1-\theta_2 s)}ds\nonumber\\
&& =\f{1+\theta_2}{l(\mu) \theta_2}e^{l(\mu)\theta_1}[e^{-l(\mu)\theta_2t_\alpha}- e^{-l(\mu)\theta_2t_\beta}]
\end{eqnarray}
Meanwhile, we compute the second term in \eqref{twoterm} to have
\begin{equation}
 \int_{t_{\alpha}}^{t_{\beta}} e^{l(\mu)\tau_{av}}ds=(t_\beta-t_\alpha) e^{l(\mu)[\theta_1-\theta_2\f{t_\alpha+t_\beta}{2}]}.
\end{equation}
Set $\zeta:=\f{1}{2}l(\mu) \theta_2(t_\beta-t_\alpha)$. Then we arrive at 
\begin{equation}
\int_{t_{\alpha}}^{t_{\beta}} (1-\tau'(s)) e^{l(\mu)\tau(s)}ds\ge \int_{t_{\alpha}}^{t_{\beta}} e^{l(\mu)\tau_{av}}ds
\end{equation}
provided that 
\begin{equation}
\f{1+\theta_2}{2}\ge \f{\zeta}{e^{\zeta}-e^{-\zeta}}, \quad \zeta>0,
\end{equation}
which is true because $ \f{\zeta}{e^{\zeta}-e^{-\zeta}}$ is increasing in $\zeta>0$ and has limit $\f{1}{2}$ as $\zeta\to 0^+$.

We summarize the above analysis to have the following result.
\begin{proposition}\label{p1}
(i) If $\tau(t_\alpha)<\tau(t_\beta)$, then $c^*(\tau)< c^*(\tau_{av})$; (ii) If $\tau(s)=\theta_1-\theta_2 s$ with $\theta_2> 0$, then $c^*(\tau)>c^*(\tau_{av})$.
\end{proposition}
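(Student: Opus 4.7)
The approach would be to invoke the variational formula
$$c^*=\inf_{\mu>0}\f{1}{\mu}\ln\int_\R e^{\mu y}\mc{K}(y)dy$$
from Theorem \ref{TheoremSpeedOfMonotoneMap}, coupled with the closed-form moment expression \eqref{computation}. Since every time-periodic coefficient other than $\tau$ is collapsed to a constant, all $\tau$-dependence of $\int_\R e^{\mu y}\mc{K}(y)dy$ concentrates in the single integral
$$I(\tau,\mu):=\int_{t_\alpha}^{t_\beta}(1-\tau'(s))e^{l(\mu)\tau(s)}ds,\qquad l(\mu):=(d_M-d_I)+\mu^2(D_I-D_M),$$
with analogue $I(\tau_{av},\mu)=(t_\beta-t_\alpha)e^{l(\mu)\tau_{av}}$ when $\tau$ is frozen at its average. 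A strict pointwise comparison of these two integrals for every $\mu>0$ then forces the same ordering on the $\Phi$-functions, and the attainment of a minimizer $\mu^*>0$ guaranteed by Theorem \ref{TheoremSpeedOfMonotoneMap} transfers that strict inequality to the infima $c^*(\tau)$ and $c^*(\tau_{av})$.

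For (i), I would apply Jensen's inequality to the concave function $\ln$ against the probability measure $ds/(t_\beta-t_\alpha)$ on $[t_\alpha,t_\beta]$, yielding
$$\ln\f{I(\tau,\mu)}{t_\beta-t_\alpha}\le \f{1}{t_\beta-t_\alpha}\int_{t_\alpha}^{t_\beta}\ln(1-\tau'(s))ds+l(\mu)\tau_{av}.$$
The last summand matches $\ln(I(\tau_{av},\mu)/(t_\beta-t_\alpha))$, so it suffices to show the first summand is strictly negative. This follows from the elementary estimate $\ln x\le x-1$ combined with $\int_{t_\alpha}^{t_\beta}(-\tau'(s))ds=\tau(t_\alpha)-\tau(t_\beta)<0$, which is precisely the hypothesis $\tau(t_\alpha)<\tau(t_\beta)$.

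For (ii), the linear form $\tau(s)=\theta_1-\theta_2 s$ lets one evaluate both integrals in closed form and reduces the desired inequality to the scalar bound
$$\f{1+\theta_2}{2}\ge g(\zeta):=\f{\zeta}{e^\zeta-e^{-\zeta}},\qquad \zeta:=\f{1}{2}l(\mu)\theta_2(t_\beta-t_\alpha).$$
Using $\sinh\zeta/\zeta=\sum_{n\ge 0}\zeta^{2n}/(2n+1)!$, the function $g$ is even, satisfies $g(0)=1/2$, and decreases strictly in $|\zeta|$; hence $g(\zeta)\le 1/2<(1+\theta_2)/2$ whenever $\theta_2>0$. This yields $I(\tau,\mu)>I(\tau_{av},\mu)$ strictly and therefore $c^*(\tau)>c^*(\tau_{av})$. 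The main subtlety is that $l(\mu)$ may vanish or change sign as $\mu$ varies over $(0,\infty)$; the evenness of $g$ absorbs the sign issue, and the continuity of $g$ at $\zeta=0$ (where the closed-form for $I(\tau,\mu)$ itself must be interpreted by L'Hopital) handles the remaining $\mu$ uniformly.
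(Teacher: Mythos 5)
Your proposal is correct and follows essentially the same route as the paper: the variational formula \eqref{cstar} with \eqref{computation} reduces everything to comparing the two integrals in \eqref{twoterm}, part (i) via Jensen's inequality plus $\ln x\le x-1$, and part (ii) via the closed-form evaluation leading to the scalar bound $\f{1+\theta_2}{2}\ge \f{\zeta}{e^{\zeta}-e^{-\zeta}}$. Your treatment is in fact a bit more careful on two minor points: you transfer the strict pointwise inequality to the infima through the attained minimizer $\mu^*$, you handle the sign/vanishing of $l(\mu)$ through the evenness of $\zeta/(e^\zeta-e^{-\zeta})$, and you correctly use that this function is decreasing on $(0,\infty)$ with limit $\f12$ at $0^+$ (the paper's text calls it ``increasing,'' evidently a slip, since the conclusion requires exactly the bound $\le\f12$ you derive).
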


{\bf Biological interpretation}: The condition $\tau(t_\alpha)<\tau(t_\beta)$ means that more maturation time is needed at the end of the breeding season than the beginning. This appeals to be a common biological scenario, in which the time heterogeneity of development can slow down the invasion. In other words, using the average of maturation time to compute the invasion speed is overestimated.

\subsection{Dependence of $c^*$ on the death rate $d_M(t)$} Assume that $d_M(t)$ consists of two parts; one is the intrinsic death rate $d(t)$, the other is the extrinsic death rate $\eta(t)$. For $C>0$, define 
\[
\Omega: =\{\eta \geq 0: \eta \in BC([0,T], \R^+),\eta(0)=\eta(T), \f{1}{T}\int_{0}^{T}\eta (s)ds=C\}.
\] 
We plan to minimize $c^*$ subject to the following constraints 
\begin{equation}\label{constraints}
\f{1}{T}\int_{0}^{T}\eta (s)ds=C.
\end{equation}
For this purpose, we write $c^*(\eta)$ as a functional of $\eta\in \Omega$. Then we have the following result.
\begin{proposition}\label{p2}
$\min_{\eta\in \Omega }c^{\ast}(\eta)$ is achieved at $\eta=\eta^*$ if and only if $\eta^*(s)\equiv 0, s\in [\alpha, t_\beta]$.
\end{proposition}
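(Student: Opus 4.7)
The plan is to use the variational formula \eqref{cstar} together with the explicit expansion \eqref{computation} to isolate the dependence on $\eta$ and then minimize pointwise in the wave parameter $\mu$. Writing $d_M = d + \eta$, using $\int_0^T \eta = CT$, and using the identity $\int_s^T + \int_0^{s-\tau(s)} = \int_0^T - \int_{s-\tau(s)}^s$, I would recast \eqref{computation} as
\begin{equation*}
\int_\R e^{\mu y}\mc{K}(y)\,dy = e^{-CT}\!\left[A(\mu) + \int_{t_\alpha}^{t_\beta} G(\mu, s)\exp\!\left(\int_{s-\tau(s)}^s \eta(\omega)\,d\omega\right) ds\right],
\end{equation*}
with positive quantities $A(\mu)$ and $G(\mu, s)$ independent of $\eta$. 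The crucial observation is that $\partial_\varphi R(s, 0) = (1-\tau'(s))\varepsilon(s)p(s-\tau(s))h'(0)$ involves only the immature mortality (through $\varepsilon$), so $\eta$ enters only via the displayed exponentials.

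Since $\eta \geq 0$ and $G > 0$, the bracketed expression is minimized over $\eta \in \Omega$ for each fixed $\mu > 0$ precisely when $\int_{s-\tau(s)}^s \eta(\omega)\,d\omega = 0$ for a.e.\ $s \in [t_\alpha, t_\beta]$. By (A3), the map $s \mapsto s-\tau(s)$ is a continuous bijection from $[t_\alpha, t_\beta]$ onto $[\alpha, \beta]$, and as $s$ varies over $[t_\alpha, t_\beta]$ the intervals $[s-\tau(s), s]$ sweep out exactly $[\alpha, t_\beta]$. Thus the condition is equivalent to $\eta \equiv 0$ a.e.\ on $[\alpha, t_\beta]$, and such $\eta^*\in\Omega$ are feasible by redistributing the required mean $C$ onto $[0, \alpha]\cup[t_\beta, T]$ while preserving the periodic boundary condition $\eta(0)=\eta(T)$.

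Next, I would translate this pointwise (in $\mu$) minimization into a statement about $c^*(\eta)$. For the ``if'' direction, the pointwise inequality $\Phi(\mu, \eta^*) \leq \Phi(\mu, \eta)$ for every $\mu > 0$ gives $c^*(\eta^*) \leq c^*(\eta)$ at once. For the converse, I would invoke Theorem \ref{TheoremSpeedOfMonotoneMap}, which via the same coercivity argument ($\Phi \to +\infty$ as $\mu \to 0^+$ or $\mu \to +\infty$) guarantees that $c^*(\eta) = \Phi(\mu_\eta, \eta)$ is attained at some $\mu_\eta > 0$. If $\eta \not\equiv 0$ on a subset of $[\alpha, t_\beta]$ of positive measure, then $\int_{s-\tau(s)}^s \eta > 0$ on a positive-measure subset of $[t_\alpha, t_\beta]$, yielding the strict pointwise inequality $\Phi(\mu_\eta, \eta) > \Phi(\mu_\eta, \eta^*) \geq c^*(\eta^*)$ and hence $c^*(\eta) > c^*(\eta^*)$.

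The main obstacle is the algebraic bookkeeping in the first step, namely separating the common factor $e^{-CT}$ cleanly and confirming that $\partial_\varphi R$ does not depend on $\eta$. The secondary subtlety is upgrading a strict pointwise-in-$\mu$ inequality to a strict inequality of the infima, which is precisely why the attainment of the outer minimum supplied by Theorem \ref{TheoremSpeedOfMonotoneMap} is needed.
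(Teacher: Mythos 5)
Your proposal is correct and follows essentially the same route as the paper: expand $\int_\R e^{\mu y}\mathcal{K}(y)\,dy$ via \eqref{computation}, use the constraint \eqref{constraints} to reduce the $\eta$-dependence to the factors $\exp\bigl(\int_{s-\tau(s)}^{s}\eta(\omega)\,d\omega\bigr)$ in the recruitment integral, and minimize pointwise in $\mu$, concluding that the optimum occurs exactly when $\eta$ vanishes on $[\alpha,t_\beta]$. In fact your write-up is slightly more complete than the paper's, since you also make explicit the ``only if'' (strictness) direction by invoking the attainment of $c^*=\Phi(\mu^*)$ at a finite $\mu^*>0$ from Theorem \ref{TheoremSpeedOfMonotoneMap}, a step the paper leaves implicit.
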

\begin{proof}
Using \eqref{computation} and \eqref{constraints}, we compute to have
\begin{eqnarray}
&&\int_\R e^{\mu y}\mc{K}(y)dy\nonumber\\
=&& e^{\int_0^T[\mu^2D_M(\varsigma)-d(\varsigma)] d\varsigma -TC} \nonumber\\
&&+\int_{t_\alpha}^{t_\beta}\partial_{\varphi}R(s,0) e^{\left(\int_s^T+\int_0^{s-\tau(s)}\right) [\mu^2D_M(\varsigma)-d(\varsigma)] d\varsigma-TC+\int_{s-\tau(s)}^s \eta(\varsigma)d \varsigma+\mu^2\sigma(s)}ds\nonumber\\
:=&& F(\eta),
\end{eqnarray}
from which we clearly see that the functional $F(\eta)$, subject to the constraints \eqref{constraints} is non-increasing in $\eta$ and independent of $\eta(s), s\not\in [\alpha, t_\beta]$. Combining the definition of $c^*$ in \eqref{cstar}, we infer that $c^*(\eta)$ is minimized if $\eta(s)\equiv 0, s\in [\alpha, t_\beta]$.
\end{proof}

To finish this subsection, we remark that from the above proof one may find that a similar result holds if the diffusion rate $D_M(t)=D(t)+\eta(t)$ is chosen to be the parameter. 

\subsection{Increasing order of $c^*$ on the diffusion rate $D_M(t)$} Assume that $D_M(t)=kD(t), k>0$, where $D(t)$ is a positive $T$ periodic function. Next we investigate the asymptotic behavior $c^{\ast}$ as $k\rightarrow +\infty$. For this purpose, we write $c^*=c^*(k)$.

\begin{proposition}\label{p3}
$\lim_{k\rightarrow+\infty}\frac{c^{\ast}(k)}{\sqrt{k}}=\inf_{\mu>0}H(\mu,+\infty) \in (0,+\infty)$, where
  \begin{equation}\nonumber
\begin{split}
  H(\mu,+\infty):=&\frac{1}{\mu}\ln\left(e^{\mu^{2}\int_{0}^{T}D(s)ds-\int_{0}^{T}d(s)ds}
  +\int_{t_{\alpha}}^{t_{\beta}}\partial_\varphi R(s,0) \right.\\ &\left.e^{-\left(\int_{0}^{s-\tau(s)}+\int_{s}^{T}\right)d(\omega)d\omega}e^{\mu^{2} \left(\int_{0}^{s-\tau(s)}+\int_{s}^{T}\right)D(\omega)d\omega} ds             \right).
\end{split}
\end{equation}
\end{proposition}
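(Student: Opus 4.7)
Starting from Theorem \ref{TheoremSpeedOfMonotoneMap} and the closed form \eqref{computation} with $D_M$ replaced by $kD$, I would write
\[
c^*(k) = \inf_{\mu>0} \frac{1}{\mu} \ln A_k(\mu), \qquad A_k(\mu) := \int_\R e^{\mu y}\mc{K}(y)\,dy.
\]
All of the $k$-dependence in $A_k$ enters through factors of the form $e^{\mu^2 k \int D}$, which suggests rescaling the dual variable by $\mu = \nu/\sqrt{k}$. This converts the variational formula into
\[
\frac{c^*(k)}{\sqrt{k}} = \inf_{\nu>0} G_k(\nu), \qquad G_k(\nu) := \frac{1}{\nu}\ln A_k\!\left(\nu/\sqrt{k}\right).
\]
Plugging back into \eqref{computation}, the only $k$-dependent contribution that survives under this substitution is the nonlocal term $\mu^2\sigma(s) = \nu^2\sigma(s)/k$ in the second exponent. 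Hence $G_k(\nu)\to H(\nu,+\infty)$ pointwise as $k\to\infty$, and the convergence is uniform on every compact subinterval of $(0,\infty)$ by dominated convergence applied to the integral over $[t_\alpha,t_\beta]$.

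\textbf{Interchanging limit and infimum.} The heart of the argument is to transport pointwise convergence through the infimum, for which I would prove that $G_k$ is coercive on $(0,\infty)$ uniformly in $k$. At the left endpoint, a direct computation using the definition \eqref{delta} of $L$ gives
\[
A_k(0) = \overline{k}_M(T,0) + \bigl(1-\overline{k}_M(T,0)\bigr)L,
\]
which is independent of $k$ and, since $L>1$ and $\overline{k}_M(T,0)\in(0,1)$, is strictly greater than $1$. Therefore $G_k(\nu)\to+\infty$ as $\nu\to 0^+$ uniformly in $k$. At the right endpoint, retaining only the first summand in $A_k$ yields
\[
G_k(\nu)\ge \nu\int_0^T D(s)\,ds -\f{1}{\nu}\int_0^T d_M(s)\,ds,
\]
which grows to $+\infty$ uniformly in $k$. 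Consequently there exist $0<\nu_1<\nu_2<\infty$ and $k_0$ such that for every $k\ge k_0$ all minimizers of $G_k$ lie in $[\nu_1,\nu_2]$, and the same bounds pass to the limit to confine the minimizer(s) of $H(\cdot,+\infty)$ to that same interval.

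\textbf{Conclusion.} Uniform convergence of $G_k$ to $H(\cdot,+\infty)$ on the compact set $[\nu_1,\nu_2]$ then gives
\[
\lim_{k\to\infty}\frac{c^*(k)}{\sqrt{k}} = \inf_{[\nu_1,\nu_2]}H(\cdot,+\infty)=\inf_{\mu>0}H(\mu,+\infty).
\]
Positivity of the limit follows because the argument of the logarithm in $H(\mu,+\infty)$ is monotone increasing in $\mu\ge 0$ and already exceeds $1$ at $\mu=0$ by the identity for $A_k(0)$ above; finiteness is guaranteed by the coercivity bound. The principal obstacle throughout is establishing coercivity uniformly in $k$ at the left endpoint, and this is what the happy identity $A_k(0)=\overline{k}_M(T,0)+(1-\overline{k}_M(T,0))L$ resolves; the remaining steps are routine.
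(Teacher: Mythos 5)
Your proof is correct and follows the same core route as the paper: the rescaling $\mu=\nu/\sqrt{k}$ applied to \eqref{cstar}--\eqref{computation}, the identification of the limit function $H(\cdot,+\infty)$ (only the term $\nu^{2}\sigma(s)/k$ survives as $k$-dependent and vanishes), and the observation that the value of the integral $\int_\R \mc{K}$ at $\mu=0$ equals $\overline{k}_M(T,0)+(1-\overline{k}_M(T,0))L>1$, which is exactly where the standing hypothesis $L>1$ enters in both arguments. The difference lies in how the limit is pushed through the infimum: the paper exploits that $H(\nu,k)$ is decreasing in $k$, that its infimum is attained at some $\nu(k)$ because $H\to+\infty$ at both ends of $(0,\infty)$, and then sandwiches $H(\nu(k),k)$ between $\inf_\nu H(\nu,+\infty)$ and $H(\nu_\infty,k)\to H(\nu_\infty,+\infty)$; you instead prove coercivity of $G_k$ uniformly in $k$ (the left-endpoint bound coming from the same identity at $\mu=0$, the right-endpoint bound from the first summand) and combine it with locally uniform convergence, a standard epi-convergence-type argument. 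Both devices are sound; the paper's is marginally shorter because monotonicity in $k$ is immediate, while yours is slightly more robust in that it does not need monotonicity at all. Two small points to make explicit: the uniform-in-$k$ left-endpoint bound uses that the rescaled argument of the logarithm is at least its value at $\nu=0$ (all $\nu^{2}$-terms in the exponents are nonnegative), which you only mention at the very end; and positivity of $\inf_{\mu>0}H(\mu,+\infty)$ requires combining pointwise positivity with the confinement of the infimum to a compact interval — your coercivity bound supplies this, so the conclusion stands, but it is worth stating that the two facts are used together.
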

\begin{proof}
By \eqref{computation}, we have
\begin{eqnarray*}
\int_\R e^{\mu y}\mc{K}(y)dy
=&& e^{\int_0^T[\mu^2kD(\varsigma)-d_M(\varsigma)] d\varsigma}\nonumber\\
&&+\int_{t_\alpha}^{t_\beta}\partial_{\varphi}R(s,0) e^{\left(\int_s^T+\int_0^{s-\tau(s)}\right) [\mu^2kD(\varsigma)-d_M(\varsigma)] d\varsigma+\mu^2\sigma(s)}ds.
\end{eqnarray*}
Introducing the variable change $\nu=\sqrt{k}\mu$, we then have
\begin{equation}\nonumber
\begin{split}
  \frac{c^{\ast}(k)}{\sqrt{k}}=&\inf_{\nu>0} \frac{1}{\nu}\ln\left(e^{\nu^{2}\int_{0}^{T}D(s)ds-\int_{0}^{T}d(s)ds}
  +\int_{t_{\alpha}}^{t_{\beta}} \partial_\varphi R(s,0) \right.\\& \left.e^{\left(\int_s^T+\int_0^{s-\tau(s)}\right) [\nu^2D(\varsigma)-d_M(\varsigma)] d\varsigma+\f{\nu^2}{k}\sigma(s)}ds
          \right)\\
          :=& \inf_{\nu>0}H(\nu, k).
\end{split}
\end{equation}
Since $H(\nu,k)$ is strictly decreasing in $k$ and 
\[
\lim_{\mu\rightarrow 0^{+}}H(\mu,k)=+\infty=\lim_{\mu\rightarrow +\infty}H(\mu,k), \quad k\in (0,+\infty],
\]
$\inf_{\mu>0}H(\mu,k)$ can be archived at $\nu=\nu(k)$. Consequently, $H(\nu(k),k)$ is non-increasing in $k$. Note that $H(\nu, +\infty)>0$ exists and $\inf_{\nu>0}H(\nu, +\infty)$ can be archived at some finite $\nu$. It then follows that 
\[
H(\nu(k), k)\ge H(\nu(k), +\infty)\ge \inf_{\nu>0}H(\nu, +\infty), \quad k>0.
\] 
Hence, the limit $\lim_{k\rightarrow+\infty}\frac{c^{\ast}(k)}{\sqrt{k}}$ exists and it is not less than the positive number $\inf_{\nu>0}H(\nu, +\infty)$. 

Next, we show that the limit equals  $\inf_{\nu>0}H(\nu, +\infty)$. Indeed, Assume $\inf_{\nu>0}H(\nu,\infty)$ is archived at $\nu=\nu_\infty$. Then $\lim_{k\rightarrow\infty}H(\nu_{\infty},k)=H(\nu_{\infty},\infty)$ due to the convexity of $H(\mu, k)$ in $\mu$ and the monotonicity in $k$, and consequently, 
\[
\lim_{k\rightarrow \infty}\frac{c^{\ast}(k)}{\sqrt{k}}= \lim_{k\rightarrow\infty}\inf_{\mu>0}H(\mu,k)=H(\mu_{\infty},\infty)=\inf_{\nu>0}H(\nu,+\infty).
\]

Finally, we prove $\inf_{\nu>0}H(\nu,+\infty)>0$. Indeed, it suffices to show that $H(\nu, +\infty)>0$ for $\nu>0$, which holds provided that 
\[
e^{-\int_{0}^{T}d(s)ds}+\int_{t_{\alpha}}^{t_{\beta}} \partial_\varphi R(s,0) e^{-\left(\int_{0}^{s-\tau(s)}+\int_{s}^{T}\right)d(\omega)d\omega}ds>1.
\]
This is equivalent to the assumption $L>1$ for the instability of $0$, as already assumed in Theorem \ref{kinetic-dynamics-thm}. The proof is complete. 
\end{proof}

To finish this subsection, we remark that $\inf_{\nu>0}H(\nu,+\infty)$ is the spreading speed of the extreme case where $D_I(t)\equiv 0$, i.e., the immature population do not move.

\section{Propagation dynamics of model \eqref{Model}}
 In the previous sections, we have obtained the propagation dynamics for the reduced iterative system $\{Q^n\}_{n\ge 0}$.  In particular, under appropriate conditions, there is a spreading speed $c^*$ that coincides with the minimal speed of traveling waves. Now we come back to the original reaction-diffusion model \eqref{Model}. Firstly, we employ the evolution idea introduced in \cite{LiangYiZhao06} to show that $c^*/T$ is the spreading speed and the minimal speed of time periodic traveling waves for the the mature population. Secondly, we use the conservation equality \eqref{conservation} to prove that the immature population share the same propagation dynamics.

\subsection{The mature equation}

The evolution idea in \cite{LiangYiZhao06} says that for a time $T$-periodic semiflow $\{P_t\}_{t\ge 0}$, the function $W(t,x-ct):=P_t[U](x-ct+ct)$ is a $T$-time periodic traveling wave solution provided that $P_T[U](x)=U(x-cT)$. Thus, we first use the $u$-equation of \eqref{Model}  to define a  time periodic semiflow. Generally speaking, \eqref{Model}  is a reaction-diffusion equation with time delay, one may try to choose $C([-\max_{s}\tau(s),0]\times \R, \R)$ or its subset as the phase space (see for instance \cite{Smith08}). However, the first equation of \eqref{Model}, that is,
\begin{equation}\label{u-eq}
\frac{\partial u}{\partial t}=D_M(t)\frac{\partial^{2} u}{\partial x^{2}}-d_M(t)u+R(t,u(t-\tau(t), \cdot))(x)
\end{equation}
has a special nonlinearity due to the assumption (B2). In particular, one can use $C(\R, \R)$ as the phase space because the solution can be uniquely determined by $u(0,x)$. Therefore, we can define a time periodic semiflow by 
\[
P_t[\phi]=u(t,\cdot;\phi),
\]
where $u(t,x;\phi)$ is the solution of \eqref{u-eq} with $u(0,\cdot;\phi)=\phi\in C(\R,\R)$ and $0\le \phi \le u^*$. Here $u^*$ is defined as in Theorem \ref{kinetic-dynamics-thm}.

By the reduction process in section 2.2, we know that the map $Q$ defined in \eqref{EqQ=S+R} equals $P_T$. Hence, $\bar{u}(t)=P_t[u^*]$ is a periodic solution of \eqref{u-eq}. Then, as a consequence of 
Theorems  \ref{kinetic-dynamics-thm} and \ref{TW} and \cite[Theorems 2.1-2.3]{LiangYiZhao06}, we have the following result.
\begin{theorem}\label{periodicTW}
Let $c^{\ast}$ be the spreading speed of $Q$. Then the following statements hold.
\begin{enumerate}
  \item[(i)]
  For $c\in(0, c^{\ast}/T)$, the first equation of \eqref{Model} has no $T$-time periodic traveling wave $U(t,x-ct)$ connecting $\bar{u}(t)$ to $0$.
  \item[(ii)]
  For $c\geq c^{\ast}/T$, the first equation of \eqref{Model} has a $T$-time periodic traveling wave solution $U(t,x-ct)$ connecting $\bar{u}(t)$ to $0$. Moreover, $U(t,\xi)$ is left continuous and nonincreasing in $\xi\in\mathbb{R}$.
\end{enumerate}
\end{theorem}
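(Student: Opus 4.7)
The plan is to exploit the direct correspondence between time-$T$ periodic traveling waves of the continuous-time semiflow $\{P_t\}_{t\ge 0}$ generated by the $u$-equation \eqref{u-eq} and discrete traveling waves of its Poincar\'e map $Q=P_T$, as developed in \cite{LiangYiZhao06}. First I would make precise that $\{P_t\}_{t\ge 0}$ is a genuine time-$T$ periodic semiflow on the phase space $\{\phi\in C(\R,\R): 0\le\phi\le u^*\}$, and that $P_T$ coincides with the map $Q$ from \eqref{EqQ=S+R}. The key observation, already used implicitly in Section 2.2, is that because $R(t,\cdot)\equiv 0$ outside $[t_\alpha,t_\beta]\subset(0,T)$, the delayed nonlinearity only fires during a single interior season, so the solution on $[0,T]$ is determined by the initial profile at $t=0$ alone rather than by a full history segment, even though \eqref{u-eq} is formally a delay equation.

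Second, I would invoke the Liang--Yi--Zhao correspondence: a function $W(t,x-ct)$ with $W$ being $T$-periodic in $t$ is a traveling wave of $\{P_t\}$ at speed $c$ connecting $\bar u(t)$ to $0$ if and only if its profile $U(\cdot)=W(0,\cdot)$ is a monotone function connecting $u^*$ to $0$ satisfying the discrete shift identity $Q[U](x)=U(x-cT)$. This is the content of \cite[Theorems 2.1--2.3]{LiangYiZhao06} applied to our setting, and it recasts periodic traveling waves of \eqref{u-eq} at speed $c$ as discrete traveling waves of $\{Q^n\}$ at speed $cT$.

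With this reduction in hand, both parts fall out of Theorem \ref{TW}. For part (i), a periodic traveling wave for \eqref{u-eq} at speed $c<c^*/T$ would provide a monotone discrete wave for $Q$ at speed $cT<c^*$, contradicting the minimality of the spreading speed $c^*$ established in Theorem \ref{TW} together with \cite[Theorem 4.1]{LiangZhao07}. For part (ii), given $c\ge c^*/T$, Theorem \ref{TW} produces a nonincreasing, left-continuous profile $U$ with $Q[U](x)=U(x-cT)$; setting $W(t,\xi):=P_t[U](\xi+ct)$ then yields the desired $T$-periodic traveling wave, and its monotonicity and left continuity in $\xi$ are inherited from those of $U$ because each $P_t$ is built from convolutions with the nonnegative Green kernels $k_M$ and $k_I$, operations that preserve both properties (this was already exploited in the proofs of Lemma \ref{SScondition} and Theorem \ref{TW}).

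The main obstacle I expect is the first step: verifying rigorously that the Poincar\'e map associated with the delay equation \eqref{u-eq} really lives on $C(\R,\R)$ and equals the map $Q$ constructed in Section 2.2. This relies sensitively on assumption (A2), in particular on the fact that for $t\in[0,t_\alpha)$ the recruitment term is identically zero so no history is consulted, and then for $t\in[t_\alpha,t_\beta]$ the delayed argument $t-\tau(t)$ lies in $[\alpha,\beta]\subset(0,t_\alpha)$ where the values of $u$ have by then been computed from $\phi$. Once this bookkeeping is settled and the identification $P_T=Q$ is confirmed, the remainder of the proof is a direct citation of \cite[Theorems 2.1--2.3]{LiangYiZhao06}.
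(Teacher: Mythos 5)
Your proposal is correct and follows essentially the same route as the paper: identify the Poincar\'e map $P_T$ of the $u$-equation with $Q$ (using that the recruitment fires only in $[t_\alpha,t_\beta]$ so the solution is determined by the profile at $t=0$ in $C(\R,\R)$), then transfer the discrete-wave results of Theorem \ref{TW} to $T$-periodic traveling waves via \cite[Theorems 2.1--2.3]{LiangYiZhao06}, with monotonicity and left continuity of the profile inherited through the kernel convolutions. The paper's proof is just a terser statement of exactly this argument.
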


\subsection{The immature equation}
In this section we will study the propagation dynamics of the immature population equation, that is, 
\begin{equation}\label{v-eq}
\frac{\partial v}{\partial t}=D_{I}(t)\frac{\partial^{2} v}{\partial x^{2}}-d_{I}(t)v+b(t,u(x,t))-R(t,u(t-\tau(t), \cdot))(x),
\end{equation}
where $u$ is supposed to be known. 

We first prove a conservation equality, which biological means that all newborns will become mature in the same year as when they were born.
\begin{lemma}
Let $u(x,t)$ be a solution of equation \eqref{u-eq}. Then one has
\begin{equation}\label{conservation1}
\int_0^T k_I(t,s,\cdot)\ast b(s,u(s,\cdot))ds\equiv \int_{0}^T k_I(t,s,\cdot)\ast R(s,u(s-\tau(s),\cdot))ds, \quad t>T,
\end{equation}
where $k_I(t,s,x)$ is the Green function of $\partial_t \rho=D_I(t)\partial_{xx}\rho-d_I(t)\rho$.  If $u(s,x) (\equiv u(s))$ is independent of $x$, then \eqref{conservation1} reduces to 
\begin{equation}\label{conservation2}
\int_{0}^{T}e^{-\int^{t}_{s}d_{I}(\varsigma)d\varsigma} b(s,u(s))ds = \int_{0}^{T}e^{-\int^{t}_{s}d_{I}(\varsigma)d\varsigma} R(s,u(s-\tau(s)))ds.
\end{equation}
\end{lemma}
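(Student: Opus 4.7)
The plan is to reduce the right-hand side to the left-hand side by unfolding the definition of $R$, using the semigroup property of the non-autonomous heat Green function $k_I$, and then applying the change of variables $\xi=s-\tau(s)$ enabled by assumption (B5). Concretely, I would first substitute
\[
R(s,u(s-\tau(s),\cdot))(x)=(1-\tau'(s))\,b(s-\tau(s),u(s-\tau(s),\cdot))\ast k_I(s,s-\tau(s),\cdot)(x)
\]
into the right-hand side of \eqref{conservation1}. Since $k_I(t,s,\cdot)$ is the time-evolution kernel of $\partial_t\rho=D_I(t)\partial_{xx}\rho-d_I(t)\rho$, uniqueness of solutions to this linear Cauchy problem yields the semigroup identity $k_I(t,s,\cdot)\ast k_I(s,s-\tau(s),\cdot)=k_I(t,s-\tau(s),\cdot)$ whenever $t\ge s\ge s-\tau(s)$, which is guaranteed since $t>T\ge s$ and $\tau(s)>0$. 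After collapsing the two convolutions, the right-hand side becomes
\[
\int_0^T (1-\tau'(s))\,k_I(t,s-\tau(s),\cdot)\ast b(s-\tau(s),u(s-\tau(s),\cdot))\,ds.
\]

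Next I would perform the substitution $\xi=s-\tau(s)$, which is a strict diffeomorphism on $\mathbb{R}$ by (B5) with Jacobian $d\xi=(1-\tau'(s))\,ds$. Because $b(\sigma,\cdot)\equiv 0$ for $\sigma\notin[\alpha,\beta]$ and $R(\sigma,\cdot)\equiv 0$ for $\sigma\notin[t_\alpha,t_\beta]$, both integrals concentrate on matched windows: the outer $s$-integral is effectively over $[t_\alpha,t_\beta]$ (by the support of $b(s-\tau(s),\cdot)$) and, via the relations $t_\alpha-\tau(t_\alpha)=\alpha$, $t_\beta-\tau(t_\beta)=\beta$, pushes forward to an integral over $[\alpha,\beta]$ in $\xi$. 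The right-hand side then equals
\[
\int_\alpha^\beta k_I(t,\xi,\cdot)\ast b(\xi,u(\xi,\cdot))\,d\xi=\int_0^T k_I(t,\xi,\cdot)\ast b(\xi,u(\xi,\cdot))\,d\xi,
\]
the last equality being the support property of $b$. This is exactly the left-hand side of \eqref{conservation1}.

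For the spatially homogeneous case \eqref{conservation2}, I would observe that when $u(s,x)\equiv u(s)$ is independent of $x$, then $b(s,u(s,\cdot))$ and (by its own expression) $R(s,u(s-\tau(s),\cdot))$ are constants in $x$, so each convolution with $k_I(t,s,\cdot)$ reduces to multiplication by $\int_{\mathbb{R}}k_I(t,s,y)\,dy=e^{-\int_s^t d_I(\varsigma)\,d\varsigma}$, an identity one checks by integrating the defining PDE in $x$. Specializing the first identity then gives \eqref{conservation2}.

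The main technical obstacle will be the change-of-variables step: one must be careful that the support relations between $b$ and $R$ transform correctly so that the $s$-integral over $[0,T]$ and the $\xi$-integral over $[0,T]$ genuinely coincide, rather than leaving boundary contributions. Assumption (B5) and the definitions of $t_\alpha,t_\beta$ are precisely what make this bookkeeping exact; everything else is an application of Fubini and the semigroup identity, both of which are routine given the super-exponential decay of the Gaussian kernel.
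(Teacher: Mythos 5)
Your proposal is correct and follows essentially the same route as the paper's proof: unfold the definition of $R$, collapse $k_I(t,s,\cdot)\ast k_I(s,s-\tau(s),\cdot)$ via the group property of the Green function, change variables $\xi=s-\tau(s)$ (with Jacobian $1-\tau'(s)$ absorbing the prefactor), and invoke the support relations $t_\alpha-\tau(t_\alpha)=\alpha$, $t_\beta-\tau(t_\beta)=\beta$ together with the vanishing of $b$ outside $[\alpha,\beta]$ and of $R$ outside $[t_\alpha,t_\beta]$. Your treatment of the spatially homogeneous case, via $\int_{\mathbb{R}}k_I(t,s,y)\,dy=e^{-\int_s^t d_I(\varsigma)\,d\varsigma}$, is also the natural reduction the paper leaves implicit.
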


\begin{proof}
From the definition of $R$ as in \eqref{recruitment}, we see that
\begin{equation}
R(s,\phi)=(1-\tau'(s))k_I(s,s-\tau(s),\cdot)\ast b(s-\tau(s), \phi),
\end{equation}
which, combining with the group property of $k_I$, yields that
\[
k_I(t,s,\cdot)\ast R(s,u(s-\tau(s),\cdot))=(1-\tau'(s))k_I(t,s-\tau(s),\cdot)\ast b(s-\tau(s), u(s-\tau(s), \cdot)).
\]
Note that $R(s, \phi)\equiv 0$ for $s\not\in [t_\alpha, t_\beta]$ and $b(s, \phi)\equiv 0$ for $s\not\in [\alpha, \beta]$. It then follows that 
\begin{eqnarray}
\int_{0}^T k_I(t,s,\cdot)\ast R(s,u(s-\tau(s),\cdot))ds &&=\int_{t_\alpha}^{t_\beta} k_I(t,s,\cdot)\ast R(s,u(s-\tau(s),\cdot))ds \nonumber\\
&& =\int_\alpha^\beta k_I(t,\eta,\cdot)\ast b(\eta, u(\eta, \cdot))d\eta\nonumber\\
&&=\int_0^T  k_I(t,\eta,\cdot)\ast b(\eta, u(\eta, \cdot))d\eta,
\end{eqnarray}
where the variable change $\eta=s-\tau(s)$ is used.  
\end{proof}

Next we write the linear inhomogeneous reaction-diffusion equation \eqref{v-eq} as the following integral form and then investigate its propagation dynamics. 
\begin{equation}\label{solution-of-v}
v(x,t) = k_I(t,0,\cdot)\ast v(\cdot,0) + \int_{0}^{t} k_I(t,s,\cdot)\ast Z(s,\cdot)(x) ds,
\end{equation}
where 
\begin{equation}\label{Z}
Z(s,x):=b(s,u(s,x))-R(s,u(s-\tau(s),\cdot))(x).
\end{equation}

Now we are in a position to present the propagation dynamics of the immature population. 
\begin{theorem}\label{v-TW}
The following statements are valid:
\begin{enumerate}
\item[(i) ] If $u(t,x)\equiv \bar{u}(t)$, then \eqref{v-eq} admits a unique nontrivial bounded periodic solution $\bar{v}(t)$ that is independent of $x$.
\item[(ii)] If $u(t,x)=U(t,x-ct)$ is a periodic traveling wave, as established in Theorem \ref{periodicTW}, then  \eqref{v-eq} admits a unique periodic traveling wave $V(t,x-ct)$ with $V(t,+\infty)=0$ and $V(-\infty)=\bar{v}(t)$.
\end{enumerate}
\end{theorem}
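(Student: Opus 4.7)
My plan is to treat parts (i) and (ii) separately, both leveraging the integral representation \eqref{solution-of-v} in combination with the conservation equality \eqref{conservation1}.

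For part (i), since $u(t,x)\equiv\bar u(t)$ depends only on $t$, the inhomogeneity $Z(s,x)$ collapses to a periodic scalar forcing $Z(s)=b(s,\bar u(s))-R(s,\bar u(s-\tau(s)))$, and I would seek $\bar v$ as a spatially homogeneous solution of the linear periodic ODE $\bar v'(t)=-d_I(t)\bar v(t)+Z(t)$. Classical Floquet theory yields a unique $T$-periodic solution since the homogeneous Floquet multiplier $e^{-\int_0^T d_I}$ differs from $1$. Imposing $\bar v(T)=\bar v(0)$ in the explicit formula $\bar v(T)=e^{-\int_0^T d_I}\bar v(0)+\int_0^T e^{-\int_s^T d_I}Z(s)\,ds$ and applying the conservation identity \eqref{conservation2} at $t=T$ (valid by continuity in $t$) forces $\bar v(0)=0$, so $\bar v(t)=\int_0^t e^{-\int_s^t d_I}Z(s)\,ds$. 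To establish nonnegativity and nontriviality I would track $W(t):=\bar v(t)e^{\int_0^t d_I}$, whose derivative $W'(t)=Z(t)e^{\int_0^t d_I}$ is zero off $[\alpha,\beta]\cup[t_\alpha,t_\beta]$, positive on $[\alpha,\beta]$, and negative on $[t_\alpha,t_\beta]$; this implies $W\ge 0$ throughout, $W>0$ strictly on $(\alpha,t_\beta)$, and $W(T)=0$ precisely by the conservation balance.

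For part (ii), I would pass to the moving frame by setting $V(t,\xi):=v(t,\xi+ct)$. Using $U(t+T,y)=U(t,y-cT)$ and translation invariance of the convolution defining $R$, one checks that the forcing $\tilde Z(t,\xi):=Z(t,\xi+ct)$ is $T$-periodic in $t$, and the equation for $V$ reads $\partial_t V-c\partial_\xi V=D_I(t)\partial_{\xi\xi}V-d_I(t)V+\tilde Z(t,\xi)$. I would then construct $V$ via the improper integral $V(t,\xi)=\int_{-\infty}^t\mathcal{G}_c(t,s,\cdot)\ast\tilde Z(s,\cdot)(\xi)\,ds$, where $\mathcal{G}_c(t,s,x)=k_I(t,s,x+c(t-s))$ is the Green function of the moving-frame linear operator; absolute convergence comes from the exponential factor $e^{-\int_s^t d_I}$ inside $\mathcal{G}_c$ together with the uniform boundedness of $\tilde Z$, and $T$-periodicity of $V$ in $t$ follows immediately from the change of variables $s\mapsto s-T$ combined with the periodicity of the coefficients and of $\tilde Z$.

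The limits $V(t,+\infty)=0$ and $V(t,-\infty)=\bar v(t)$ would be read off the integral representation by dominated convergence: as $\xi\to+\infty$ the wave profile $U(t,\xi)\to 0$ uniformly in $t$ drives $\tilde Z(s,\xi)\to 0$ uniformly in $s$, so $V\to 0$; as $\xi\to-\infty$, $U(t,\xi)\to\bar u(t)$ gives $\tilde Z(s,\xi)\to Z(s)$, and the limit integral is identified with the $(-\infty,t]$ representation of the periodic $\bar v(t)$ from part (i). Uniqueness in both parts reduces to showing that the only bounded $T$-periodic classical solution of the homogeneous linear problem is zero, which follows from a Poincar\'e-map contraction argument based on the bound $e^{-\int_0^T d_I}<1$. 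I anticipate the chief obstacle to be the uniform passage to the limits $\xi\to\pm\infty$, because $\tilde Z$ inherits the nonlocal convolution structure of $R$ through the Gaussian kernel $k_I(s,s-\tau(s),\cdot)$; the tails of this kernel must be controlled uniformly in $s\in[t-T,t]$ to legitimize the switch from pointwise to dominated convergence.
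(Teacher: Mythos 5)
Your proposal is correct, and it reaches the conclusion by a construction that differs in one genuine respect from the paper's. The paper builds both $\bar v$ and $V$ as the particular solutions of \eqref{v-eq} with zero data at $t=0$, i.e.\ $v(t,\cdot)=\int_0^t k_I(t,s,\cdot)\ast Z(s,\cdot)\,ds$, and the whole burden of proving $T$-periodicity falls on the conservation equality \eqref{conservation1}--\eqref{conservation2}, which makes the contribution of the first period vanish when the solution is advanced by $T$. You instead obtain periodicity for free: in (i) from Floquet theory for the scalar equation $\bar v'=-d_I(t)\bar v+\bar Z(t)$ (the multiplier $e^{-\int_0^T d_I}<1$), and in (ii) from the ancient-time representation $V(t,\xi)=\int_{-\infty}^{t}k_I\bigl(t,s,\cdot+c(t-s)\bigr)\ast\tilde Z(s,\cdot)(\xi)\,ds$, whose absolute convergence and invariance under $s\mapsto s-T$ follow from $d_I>0$ and the $T$-periodicity of the coefficients and of $\tilde Z$ in the moving frame; in your route the conservation equality is demoted to pinning down $\bar v(0)=0$ (equivalently, identifying your $V$ and $\bar v$ with the paper's zero-data solutions) and to your sign analysis of $W(t)=\bar v(t)e^{\int_0^t d_I}$, which incidentally supplies the nonnegativity and nontriviality of $\bar v$ that the paper's proof does not address. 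The remaining ingredients coincide with the paper's: uniqueness via the contraction $e^{-\int_0^T d_I}<1$ applied to the homogeneous problem (your Poincar\'e-map argument is exactly the rigorous form of the paper's ``the only bounded [periodic] solution is zero''), and the limits $V(t,+\infty)=0$, $V(t,-\infty)=\bar v(t)$ by dominated convergence, where your flagged obstacle --- uniform-in-$s$ control of the Gaussian tails of $k_I(s,s-\tau(s),\cdot)$ inside $R$ so that $\tilde Z(s,\xi)\to Z(s,\pm\infty)$ uniformly on the compact period --- is precisely the point the paper disposes of by invoking periodicity in $s$; your identification of the $\xi\to-\infty$ limit with $\bar v(t)$ is cleanest if you quote the Floquet uniqueness from (i) rather than re-expanding the tail integral. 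In short: same analytical machinery, but your existence/periodicity step is structurally different and slightly more robust, while the paper's makes the biological conservation law do that work explicitly.
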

\begin{proof}
We first prove the uniqueness. Indeed, assume for the sake of contradiction that there are two solutions $v_1(x,t),v_2(x,t)$. Then $\tilde{v}:=v_1-v_2$ satisfies $\tilde{v}_t=D_I(t)\tilde{v}_{xx}-d_I(t)\tilde{v}$, for which the only bounded solution is zero. Thus, the uniqueness is proved. 

Next we prove the existence. Indeed, choosing $v(0,x)\equiv 0$, we obtain a special solution 
\[
v(x,t)=\int_0^t k_I(t,s,\cdot)\ast Z(s,\cdot)(x) ds.
\] 
Now we proceed with the two cases. (i) $u(t,x)\equiv \bar{u}(t)$. Note that $Z(x,s)$ is assumed to be independent of $x$, so is $v(x,t)$. Thus,  we may write $\bar{v}(t)$ and $\bar{Z}(t)$ instead of $v(x,t)$ and $Z(x,t)$, respectively. Note that $\bar{Z}$ is periodic with $\int_0^T e^{-\int^{t}_{s}d_{I}(\varsigma)d\varsigma} \bar{Z}(s)ds=0$ in virtue of \eqref{conservation2}. It then follows that 
\begin{eqnarray}
  \bar{v}(t+T)&&= \int_{0}^{t+T} e^{-\int^{t+T}_{s}d_{I}(\varsigma)d\varsigma} \bar{Z}(s)ds\nonumber\\
  &&=\int_{0}^{T}e^{-\int^{t+T}_{s}d_{I}(\varsigma)d\varsigma} \bar{Z}(s) ds+\int_{T}^{T+t}e^{-\int^{t+T}_{s}d_{I}(\varsigma)d\varsigma} \bar{Z}(s) ds\nonumber\\
&&=e^{-\int_t^{t+T}d_{I}(\varsigma)d\varsigma }\int_{0}^{T}e^{-\int^{t}_{s}d_{I}(\varsigma)d\varsigma} \bar{Z}(s) ds +\int_{0}^{t}e^{-\int^{t+T}_{s+T}d_{I}(\varsigma)d\varsigma} \bar{Z}(s+T) ds\nonumber\\
&&=\int_{0}^{t}e^{-\int^{t}_{s}d_{I}(\varsigma)d\varsigma} \bar{Z}(s) ds\nonumber\\
&&=\bar{v}(t),
 \end{eqnarray}
 where the periodicity of $d_I$ is also used. (ii) $u(t,x)=U(t,x-ct)$. In this case, $Z(s,x)=b(s, U(s,x-cs))-R(s,U(s-\tau(s), \cdot-cs+c\tau(s)))(x)$. Define
 \begin{equation}\label{def-V}
 V(t,\xi):=\int_0^t k_I(t,s,\cdot)\ast Z(s,\cdot)(\xi+ct) ds.
 \end{equation}
Obviously, $V(t,x-ct)$ is a solution of \eqref{v-eq} with zero initial value. Then we prove that $V$ is periodic in $t$ with $V(t,+\infty)=0$ and $V(t, -\infty)=\bar{v}(t)$. Indeed, notice that 
\begin{equation}
Z(s+T, x+cT)=Z(s,x)
\end{equation}
and there exists a constant $l_{T}>0$ such that
\begin{equation}
k_I(t+T, s, x)=l_T k_I(t,s,x).
\end{equation}
It then follows from \eqref{conservation1} that 
\begin{eqnarray}
 V(t+T,\xi)&&= \left(\int_0^T +\int_T^{t+T}\right ) k_I(t+T,s,\cdot)\ast Z(s,\cdot)(\xi+ct+cT) ds\nonumber\\
 &&=\int_T^{t+T} k_I(t+T,s,\cdot)\ast Z(s,\cdot)(\xi+ct+cT) ds\nonumber\\
 &&=\int_0^{t} k_I(t+T,s+T,\cdot)\ast Z(s+T,\cdot)(\xi+ct+cT) ds\nonumber\\
 &&=\int_0^{t} k_I(t+T,s+T,\cdot)\ast Z(s+T,\cdot+cT)(\xi+ct) ds\nonumber\\
 &&=V(t,\xi).
\end{eqnarray}
Finally, we prove the limits. Indeed, since
\begin{equation}
Z(s,\pm \infty)=b(s, U(s,\pm\infty))-R(s, U(s-\tau(s), \pm\infty))
\end{equation} 
uniformly in $s\in \R$ due to the periodicity in $s$. Then in \eqref{def-V}, passing $\xi\to\pm \infty$ in advantage of the Lebesgue dominated convergence theorem and the periodicity of $V(t,\xi)$ in $t$, we obtain
\begin{equation}
 V(t,\pm\infty)=\int_0^t   e^{-\int^{t}_{s}d_{I}(\varsigma)d\varsigma}  Z(s,\pm\infty)ds
\end{equation}
uniformly for $t\in\R$.  Clearly, $Z(s,+\infty)=0$, so is $V(t,+\infty)$. Since $Z(s,-\infty)=b(s,\bar{u}(s))-R(s,\bar{u}(s-\tau(s)))$, we see from (i) that $V(t,-\infty)=\bar{v}(t)$.  
\end{proof}

\begin{theorem}
Let $u(x,t)$ be a solution of \eqref{u-eq} with spreading speed $c^*/T$. Then for any bounded initial value, the solution of \eqref{v-eq} propagates asymptotically with speed $c^*/T$. More precisely,  $\lim_{t\to +\infty} \sup_{|x|\geq ct}v(x,t)=0$ for $c>c^{\ast}/T$ and $\lim_{t\to+\infty}\sup_{|x|\leq ct}|v(x,t)-\bar{v}(t)|=0$ for $c\in(0,c^{\ast}/T)$.
\end{theorem}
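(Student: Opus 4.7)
The plan is to exploit the integral representation \eqref{solution-of-v} of $v$ together with the conservation identity \eqref{conservation1} and the assumed spreading of $u$. First I observe that the homogeneous piece satisfies $\|k_I(t,0,\cdot)\ast v(\cdot,0)\|_\infty\le \|v(\cdot,0)\|_\infty e^{-\int_0^t d_I(\varsigma)d\varsigma}\to 0$, so it contributes a uniform $o(1)$ term and may be absorbed; the remaining object to analyze is $\int_0^t k_I(t,s,\cdot)\ast Z(s,\cdot)(x)\,ds$ with $Z$ as in \eqref{Z}.

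The decisive reduction is to apply \eqref{conservation1} not only on $[0,T]$ but on every complete year $[nT,(n+1)T]$. The derivation of \eqref{conservation1} uses only the representation \eqref{recruitment} of $R$, the Chapman--Kolmogorov property of $k_I$, the supports of $b(s,\cdot)$ in $[\alpha,\beta]$ and $R(s,\cdot)$ in $[t_\alpha,t_\beta]$, and the change of variable $\eta=s-\tau(s)$; all of these are $T$-periodic in $s$, so the identity transfers to any $[nT,(n+1)T]$ for any $u$ (not required to be periodic). Writing $t=NT+r$ with $r\in[0,T)$ and $t$ large, every complete year contributes zero, so
\[
v(x,t)=\int_{NT}^{t} k_I(t,s,\cdot)\ast Z(s,\cdot)(x)\,ds+o(1),
\]
uniformly in $x$. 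This is a convolution over a window of bounded length at most $T$ against a Gaussian kernel of bounded variance, which sets up both the outer and inner estimates.

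For the outer claim I fix $c>c^*/T$, pick $c'\in(c^*/T,c)$ and a threshold $M>0$. If $|x|\ge ct$ and $|y-x|\le M$, then $|y|\ge ct-M>c's$ for $s\in[NT,t]$ and all sufficiently large $t$, so the spreading of $u$ yields $|u(s,y)|+|u(s-\tau(s),y)|\le \varepsilon$ and hence $|Z(s,y)|\le C\varepsilon$. Splitting the convolution at $|y-x|=M$, the inner piece is $O(\varepsilon)$ and the outer piece is bounded by $\|Z\|_\infty\, T\sup_{s}\int_{|z|>M}k_I(t,s,z)\,dz$, which Gaussian tail bounds make arbitrarily small as $M\to\infty$ (the variance $\int_s^t D_I(\varsigma)d\varsigma$ is uniformly bounded above since $t-s\le T$). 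Hence $\sup_{|x|\ge ct}v(x,t)\to 0$.

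For the inner claim I fix $c\in(0,c^*/T)$, interpose $c''\in(c,c^*/T)$, and decompose $Z=\bar Z+(Z-\bar Z)$ with $\bar Z(s):=b(s,\bar u(s))-R(s,\bar u(s-\tau(s)))$. Since $\int_\R k_I(t,s,y)\,dy=e^{-\int_s^t d_I(\varsigma)d\varsigma}$, the $\bar Z$ part contributes $\int_{NT}^{t}e^{-\int_s^t d_I(\varsigma)d\varsigma}\bar Z(s)\,ds$, which by the spatially homogeneous conservation \eqref{conservation2} applied year-by-year equals $\bar v(t)$ exactly; the $(Z-\bar Z)$ piece is controlled by the same $M$-splitting as above, using that for $|x|\le ct$ and $|y-x|\le M$ one has $|y|\le c''s$ and hence $u(s,y)\to\bar u(s)$ uniformly by spreading. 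The main obstacle throughout is this $M$-splitting trade-off: the spreading estimates for $u$ are only locally uniform in cones, while $k_I$ has global support, and the trade-off works precisely because the conservation collapses the relevant time window to length at most $T$, keeping the kernel's variance uniformly bounded.
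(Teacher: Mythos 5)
Your argument is correct, and it takes a genuinely different route from the paper. The paper keeps the full Duhamel integral $\int_0^t k_I(t,s,\cdot)\ast Z(s,\cdot)\,ds$, changes variables to $t-s$, and applies Lebesgue's dominated convergence theorem in $(s,y)$, the dominating function coming from the exponential decay of the kernel mass $\int_\R k_I(t,t-s,y)\,dy=e^{-\int_{t-s}^t d_I}$; the pointwise input is a separate claim that $Z(t-s,x-y)$ itself spreads with speed $c^*/T$, proved via the sublinearity $Z(t,x)\le b'(t,0)u(t,x)$ for the outer bound and the Lipschitz continuity of $b$ together with the nonlocal-term arguments of Fang--Wei--Zhao for the inner bound. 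You instead observe that the conservation identity \eqref{conservation1} (and its homogeneous version \eqref{conservation2}) transfers to every complete year $[nT,(n+1)T]$ for an arbitrary solution $u$, since its derivation only uses the structure of $R$, the Chapman--Kolmogorov property of $k_I$, the seasonal supports and the change of variable $\eta=s-\tau(s)$, all $T$-periodic; this collapses the memory to the last partial year $[NT,t]$, after which everything follows from explicit $\varepsilon$--$M$ splittings with uniformly bounded Gaussian variance, and the year-by-year homogeneous conservation identifies the $\bar Z$ contribution exactly as $\bar v(t)$. Your route buys a finite-memory representation (no dominated convergence needed, and a cleaner identification of the inner limit), at the cost of having to justify the year-by-year conservation, which you do correctly; the paper's route is shorter given its earlier lemmas and the citation. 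One small point you gloss over: the bound $|Z(s,y)|\le C\varepsilon$ (and likewise $Z(s,y)\to\bar Z(s)$ in the inner cone) is not immediate from the values of $u$ at the single point $y$, because $R(s,u(s-\tau(s),\cdot))(y)$ is a Gaussian average through $f_{\sigma(s)}$; you need one more tail splitting of $f_{\sigma(s)}$ (its variance is bounded by $\max_s\sigma(s)$), exactly parallel to your splitting of $k_I$, and similarly the inner estimate for $u(s-\tau(s),\cdot)$ requires shrinking $c''$ slightly to absorb the bounded shift $\tau(s)$. These are routine repairs within your framework — the paper handles the same issue by citing its earlier work — so they do not constitute a gap.
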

\begin{proof}
We first claim that for any $s\in [0,t)$ and $y\in\R$, $Z(t-s,x-y)$ propagates asymptotically with speed $c^*/T$. Let us postpone the poof of the claim and quickly reach the conclusion. Define $M:=\sup_{x\in\R}\|v(x,0)\|$. By \eqref{solution-of-v}, we have
\begin{equation}
v(x,t)\le M e^{-\int^{t}_{0}d_{I}(\varsigma)d\varsigma} +\int_0^t \int_\R k_I(t,t-s,y)Z(t-s,x-y)dyds.
\end{equation}
Using Lebesgue's dominated convergence theorem, we obtain $\lim_{t\to\infty}\sup_{|x|\ge ct}v(x,t)=0$ for $c>c^*/T$. From the proof of Theorem \ref{v-TW} we know that 
\[
\bar{v}(t)=\int_0^t e^{-\int^{t}_{s}d_{I}(\varsigma)d\varsigma} Z(s,-\infty)ds.
\]
Consequently, 
\begin{eqnarray}
|v(x,t)-\bar{v}(t)| \le &&  M e^{-\int^{t}_{0}d_{I}(\varsigma)d\varsigma} + \left|\int_0^t \int_\R k_I(t,s,y) Z(s, x-y)dyds\right. \nonumber\\
&&\left. -\int_0^t e^{-\int^{t}_{s}d_{I}(\varsigma)d\varsigma} Z(s,-\infty)ds\right|\nonumber\\
\le && M e^{-\int^{t}_{0}d_{I}(\varsigma)d\varsigma} + \int_0^t \int_\R k_I(t,s,y)| Z(s, x-y)-Z(s,-\infty)|dyds\nonumber\\
=&& M e^{-\int^{t}_{0}d_{I}(\varsigma)d\varsigma} + \int_0^t \int_\R k_I(t,t-s,y)| Z(t-s, x-y)-Z(t-s,-\infty)|dyds,\nonumber\\
\end{eqnarray}
which implies that $\lim_{t\to\infty}\sup_{|x|\le ct} |v(x,t)-\bar{v}(t)| =0$ uniformly in $t\in\R$ for $c\in(0,c^*/T)$, thanks to Lebesgue's dominated convergence theorem.

Proof of the claim. Fix $s$ and $y$. By the same arguments as in the proof of \cite[Theorem 3.2]{Fang-Wei-Zhao2008}, we can infer that $Z(t-s,x-y)$ propagates asymptotically with speed $c^*/T$ provided that $Z(t,x)$ propagates with the same speed. Indeed, since the birth function $b$ is sublinear, there holds
\begin{equation}
Z(t,x)\le b(t, u(t,x))\le b'(t, 0)u(t,x)
\end{equation}
Thus, $\lim_{t\to\infty}\sup_{|x|\ge ct} Z(t,x)=0$ for $c>c^*/T$. On the other side, since the birth function $b$ is Lipschitz continuous, there exists $C>0$ such that for $c\in (0, c^*/T)$,
\begin{eqnarray*}
&&\lim_{t\to\infty}\sup_{|x|\le ct}\left|Z(t,x)-[b(t,\bar{u}(t))-R(t,\bar{u}(t-\tau(t)))]\right|\\
\le && C\lim_{t\to\infty}\sup_{|x|\le ct} [|u(t,x)-\bar{u}(t)|+|u(t-\tau(t),\cdot)-\bar{u}(t-\tau(t))|\ast f_{\sigma(t)}].
\end{eqnarray*}
Note that, as assumed, the first term has limit zero. The second also has limit zero thanks to the same arguments as in the proof of \cite[Theorem 3.2]{FangWeiZhao08}. 
\end{proof}

\section{Summary and Discussion}

A time periodic and diffusive model system in unbounded domain is proposed to study the seasonal influence on the propagation dynamics of a single invasive species. The two-stage structure by age and the seasonal developmental rate jointly result in a time periodic delay, which, combining with the mobility of immature population, then gives rise to the spatial non-locality for the recruitment of mature population. Further, the scenario of distinct breading and maturation seasons within the same year makes the mathematical analysis presented in this paper possible. Indeed, it is the key to reduce the model system to an explicit mapping for the mature population, partially coupled by a linear and inhomogeneous diffusive equation for the immature population. Finally, some recently developed dynamical system theories apply to the mapping under suitable technical assumptions, and the finding of the conservation equality plays a vital role in the study of the linear inhomogeneous equation once the reduced mapping is well understood. 

In Theorems \ref{TheoremSpeedOfMonotoneMap} and \ref{TW}, we established the spreading speed $c^*$ and its coincidence with the minimal wave speed for model \eqref{Model}. Some seasonal influences on $c^*$ are also obtained. In particular, it is known from literature \cite{LiRuanWang07, WangLiRuan08} that time delay decreases the speed, and it is shown by Proposition \ref{p1} that the seasonality can further decrease the speed if the development rate is decreasing in time during the maturation season. Proposition \ref{p2} shows that the extrinsic death rate in the season without juveniles contributes more than other seasons to decrease the speed, while the consequent remark suggests the opposite for extrinsic diffusion rate.  Proposition \ref{p3} shows that the speed is asymptotic to infinity with the same order as the square root of the diffusion rate as it increases to infinity. 

In a word, a useful message for the optimal control of the biological invasion with yearly generation structure is to kill the mature population or restrict their mobility in the season without juveniles. 

In this paper, the proposed model \eqref{Model} with a special scenario under suitable technical conditions, including the monotonicity and sublinearity,  is analyzed. By considering other biological scenarios, one may have several interesting and challenging mathematical questions. For instance, could the invasion be sped up if 
the diffusion mechanism is modeled by the nonlocal dispersal? Is there any new dynamics if the monotonicity condition imposed in this paper is invalid? What if a weak or strong Allee effect in the breeding season is introduced? How to incorporate spatial heterogeneity into the model and how does it influence the dynamics? These questions are under the authors' investigations.

\section*{Acknowledgments} This work received fundings from the National Natural Science of Foundation of China (No. 11371111, 11771108) and the Fundamental Research Funds for the Central Universities of China.

%-----------------------------------------------
%% References
%-----------------------------------------------

\end{document}